\theoremstyle{plain}
\newtheorem{thm}{Theorem}[section]
\newtheorem{lem}[thm]{Lemma}
\newtheorem{prop}[thm]{Proposition}
\theoremstyle{definition}
\newcommand{\br}[2][q]{[#2]_{#1}}
\newcommand{\s}{\sigma}
\newcommand{\qyeul}[2]{E_{#1, #2}(q,y)}
\newcommand{\N}{\mathbb{N}}
\newcommand{\inv}{\mathrm{INV}}
\newcommand{\maj}{\mathrm{MAJ}}
\newcommand{\den}{\mathrm{DEN}}
\newcommand{\mak}{\mathrm{MAK}}
\newcommand{\sor}{\mathrm{SOR}}
\newcommand{\mstat}{\mathrm{MST}}
\newcommand{\des}{\mathrm{des}}
\newcommand{\exc}{\mathrm{exc}}
\newcommand{\estat}{\mathrm{est}}
\newcommand{\stc}{\mathrm{stc}}
\newcommand{\rlmin}{\mathrm{rlmin}}
\newcommand{\cyc}{\mathrm{cyc}}
\newcommand{\sstat}{\mathrm{sst}}
\newcommand{\invcode}{\mathrm{INVcode}}
\newcommand{\majcode}{\mathrm{MAJcode}}
\newcommand{\dencode}{\mathrm{DENcode}}
\newcommand{\hancode}{\mathrm{HANcode}}
\newcommand{\makcode}{\mathrm{MAKcode}}
\newcommand{\sorcode}{\mathrm{SORcode}}
\newcommand{\mstatcode}{\mathrm{MSTcode}}
\newcommand{\add}{\mathrm{sum}}
\newcommand{\codemap}{\mathcal{C}}
\newcommand{\rem}{\mathrm{Rem}}
\newcommand{\zer}{\mathrm{zer}}
\newcommand{\st}{\mathrm{st}}
\title{Stirling-Euler-Mahonian Triples of Permutation Statistics}
\author{Frederick Butler}
\date{}
\begin{document}

\maketitle






\begin{abstract}
Two well-known distributions in the study of permutation statistics are the Mahonian and Eulerian distributions. Mahonian statistics include the major index $\maj$ and the number of inversions $\inv$, while examples of Eulerian statistics are the number of descents $\des$ and excedances $\exc$. Also of interest are pairs of permutation statistics that have the same joint distribution as $(\des, \maj)$; these are called Euler Mahonian pairs. In this paper we define a \textit{Stirling} statistic as one that is equidistributed with the number of right-to-left minima $\rlmin$ and a \textit{Stirling-Euler-Mahonian} triple as a triple of statistics with the same trivariate distribution as $(\rlmin, \des, \maj)$. We prove that several triples involving well-known statistics are Stirling-Euler-Mahonian. A general bijective technique is demonstrated for any Mahonian statistic that has a permutation code. Finally, we note several properties of Stirling-Euler-Mahonian triples that follow from earlier results. 
\end{abstract}




\section{Introduction}
Let $\mathfrak{S}_n$ denote the set of all permutations of $\{1, 2, \ldots, n\}$. A \textit{permutation statistic} is a function $\mathfrak{S}_n\to \{0, 1, 2, 3, \ldots\}$. While permutation statistics can have any distribution on $\mathfrak{S}_n$, much research has focused on the Mahonian and Eulerian distributions. 

A permutation statistic $\mstat$ is said to be \textit{Mahonian} if
\begin{equation}\sum_{\s\in \mathfrak{S}_n} q^{\mstat(\s)}=(1+q)(1+q+q^2)\cdots (1+q+\cdots +q^{n-1}).\label{eq:DefMahStat}\end{equation}
Following the convention established in~\cite{CSZ97}, we will denote Mahonian statistics in all capital letters. We give two classic examples of Mahonian statistics.

Let $\s_i$ denote the $i$th entry of the word representing the permutation $\s\in\mathfrak{S}_n$. An \textit{inversion} in $\s$ is a pair $j<k$ such that $\s_j>\s_k$. We use $\# X$ to denote the cardinality of the set $X$. The statistic $\inv$  defined as
$$\inv(\sigma)=\#\{j<k \, \vert \, \sigma_j >\sigma_k\}$$
is called the \textit{inversion number} of $\s$. The inversion number was proven to be Mahonian by Rodrigues in~\cite{Rod}.  

The \textit{major index} $\maj(\sigma)$ is given by
$$\maj(\sigma)=\sum_{\sigma_{j}>\sigma_{j+1}} j.$$
Note $\maj(\s)$ can be described as the sum of the descents of $\s$ (we define descents below). The statistic $\maj$ was proven to be Mahonian in~\cite{MacMahon} by MacMahon, after whom both the statistic and the distribution are named (MacMahon was a major in the British military). The first bijective proof of this fact was given by Foata in~\cite{Foata68}. 

A \textit{descent} of $\sigma$ is a position $j$ for which $\sigma_{j}>\sigma_{j+1}$. The value $\s_{j}$ is called the \textit{descent top} and the value $\s_{j+1}$ the \textit{descent bottom}. We'll use these terms again in Section~\ref{Sec:IntroToMAK}. The \textit{descent number}
$$\des(\sigma)=\#\{j \, \vert \, \sigma_{j}>\sigma_{j+1} \}$$ counts the number of descents in $\s$.

A permutation statistic $\estat$ is said to be \textit{Eulerian} if it has the same distribution as $\des$; that is,
\begin{equation}\sum_{\s\in \mathfrak{S}_n}x^{\estat(\s)}=\sum_{\s\in \mathfrak{S}_n}x^{\des(\s)}.\label{eq:DefEulStat}\end{equation}
The polynomials in (\ref{eq:DefEulStat}) are called the \textit{Eulerian polynomials}. While there is no known closed formula for (\ref{eq:DefEulStat}) similar to that in (\ref{eq:DefMahStat}), there is an exponential generating function whose coefficients are the Eulerian polynomials; see~\cite{foata}.

A well-known Eulerian statistic is $\exc$, which we define now. An \textit{excedance} of $\sigma$ is a position $j$ for which $\sigma_j>j$. Using similar language to that for descents, we will refer to the value $\s_j$ as the \textit{excedance top} (we will use this terminology more in Sections~\ref{Sec:Den} and~\ref{Sec:IntroToMAK}). The \textit{excedance number} $$\exc(\sigma)=\#\{j \, \vert \, \sigma_j>j\}$$ counts the number of excedances in $\s$. MacMahon first proved $\exc$ is Eulerian in~\cite{MacMahon}, while Foata offered a combinatorial proof in~\cite{Foata64}.

Eventually research turned to the study bivariate distributions. A pair of permutation statistics $(\estat, \mstat)$ is \textit{Euler-Mahonian} if it is equidistributed with $(\des, \maj)$; that is,
\begin{equation}
\sum_{\s\in\mathfrak{S}_n}x^{\estat(\s)}q^{\mstat(\s)}=\sum_{\s\in\mathfrak{S}_n}x^{\des(\s)}q^{\maj(\s)}\label{Eq:DefEulMah}
\end{equation}
Note (\ref{Eq:DefEulMah}) implies that $\estat$ must be Eulerian and $\mstat$ must be Mahonian. There are several known Euler-Mahonian pairs of statistics. These include $(\exc,\den)$, where $\den$ is \textit{Denert's statistic} described in Section~\ref{Sec:Den}; $(\des,\mak)$ with $\mak$ as defined in Section~\ref{Sec:IntroToMAK}; and $(\stc, \inv)$, where $\stc$ is discussed in Section~\ref{Sec:Inv}. The pair $(\exc,\den)$ was first proven to be Euler-Mahonian in~\cite{FZ90}; this fact was proven bijectively in~\cite{Han}. The other two pairs were proven Euler-Mahonian in~\cite{CSZ97} and~\cite{Skandera}, respectively. 

We will call a permutation statistic $\sstat$ a \textit{Stirling} statistic if
\begin{equation}\sum_{\s\in \mathfrak{S}_n} y^{\sstat(\s)}=y(y+1)(y+2)\cdots (y+n-1).\label{Def:Stirling}\end{equation}
That is, $\sstat$ has the same generating function as the unsigned Stirling numbers of the first kind. See~\cite[Proposition 1.3.7]{stanley}.

We describe several examples of Stirling statistics. The first is the \textit{number of cycles} in a decomposition of $\sigma$ into disjoint cycles, including those of length $1$, which is denoted $\cyc(\sigma)$. A value $\sigma_j$ is a \textit{right-to-left minimum} of $\sigma$ if it is smaller than all values to its right in the word for $\s$, so $\sigma_j<\sigma_k$ for all $k>j$. The statistic $\rlmin(\sigma)$ is defined to equal the number of right-to-left minima of $\sigma$. Through symmetry it is easy to show that $\rlmin$ has the same distribution on $\mathfrak{S}_n$ as the number of \textit{right-to-left maxima}, \textit{left-to-right minima}, and \textit{left-to-right maxima} (see the proof of~\cite[Proposition 1.3.1]{stanley}). It can then be established that $\cyc$ and $\rlmin$ both satisfy (\ref{Def:Stirling}) via~\cite[Corollary 1.3.11 ]{stanley}.

We will now define a \textit{Stirling-Euler-Mahonian} triple of statistics $(\sstat, \estat, \mstat)$ as one that has the same trivariate distribution as the triple $(\rlmin, \des, \maj)$; that is,
\begin{equation}\label{Def:CEM}
  \sum_{\s\in \mathfrak{S}_n}y^{\sstat(\s)}x^{\estat(\s)}q^{\mstat(\s)}=\sum_{\s\in \mathfrak{S}_n}y^{\rlmin(\s)}x^{\des(\s)}q^{\maj(\s)}.
\end{equation}

\noindent Note~(\ref{Def:CEM}) implies that $\sstat$ must be a Stirling statistic, $\estat$ must be Eulerian, and $\mstat$ must be Mahonian. What we call Stirling-Euler-Mahonian above was first introduced as \textit{cycle-Euler-Mahonian} in~\cite{CycleCounting}. We feel the revised term better reflects the more general nature of the statistic $\sstat$.

Our main goal in this paper will be to prove that several triples involving various permutation statistics are Stirling-Euler-Mahonian. We give a general technique for doing so for any Mahonian permutation statistic that has a permutation code, which we define in the next section. The remainder of this paper is organized as follows. In Section~\ref{Sec:Codes} we define permutation codes and introduce the theorems we will be using to prove the main results of this paper. In the next several sections, we will use these techniques to find Stirling-Euler-Mahonian mates for known Mahonian statistics:  the inversion number $\inv$ in Section~\ref{Sec:Inv}; Denert's statistic $\den$ in Section~\ref{Sec:Den} (where we give two such mates); Foata and Zeilberger's $\mak$ \cite{FZ90} in Section~\ref{Sec:IntroToMAK}; and Petersen and Wilson's sorting index $\sor$ in Section~\ref{Sec:IntroToSor}. Finally in Section~\ref{Sec:Properties}, we prove several properties of Stirling-Euler-Mahonian triples that follow from earlier results. 










\section{Permutation codes and $\majcode$}\label{Sec:Codes}

In this section we introduce the tools and theorems we will use to prove the main results of this paper. Let $E_n$ denote the set of all $n$-tuples of integers $(e_1, e_2, \ldots, e_n)$ satisfying $0\leq e_j\leq j-1$ for $j=1, 2, \ldots, n$. Note that $\#E_n=n!$. For any $n$-tuple $(e_1, e_2, \ldots, e_n)\in E_n$, let
$$\add((e_1, e_2, \ldots, e_n))=e_1+e_2+\cdots +e_n.$$
For a Mahonian statistic $\mstat$, a bijection $\mstatcode:\mathfrak{S}_n\to E_n$ with the property that $\add(\mstatcode(\s))=\mstat(\s)$ is called a \textit{permutation code} for $\mstat$.

We will define such a code for $\maj$ using the work of Skandera~\cite{Skandera}. What we will call the $\majcode$, Skandera refers to as the major index table. Let $\sigma\in\mathfrak{S}_n$ and define $\sigma^{(j)}$ to be the subword of $\sigma$ containing only the values $1, 2, \ldots j$. Let
$$m_j(\s)=\maj(\sigma^{(j)})-\maj(\sigma^{(j-1)}).$$
Then we can define $$\majcode(\s)=(m_1(\s), m_2(\s), \ldots,  m_n(\s)).$$
For example $\majcode(354162)=(0,0,1,1,3,5)$; see Table~\ref{table:majcode ex} for more details. Note that $\add(\majcode(354162))=\add((0,0,1,1,3,5))=10=\maj(354162)$. Skandera~\cite[Theorem 2.1]{Skandera} proves that $\majcode: \mathfrak{S}_n\to E_n$ is a bijection. It is clear from the definition that for any $\s$, $\add(\majcode(\s))=\maj(\s)$. 

\begin{table}[]
\caption{Calculating $\majcode(354162)$}
\label{table:majcode ex}
\begin{tabular}{cccc}
$j$ & $\sigma^{(j)}$ & $\maj(\sigma^{(j)})$ & $m_j(\sigma)$ \\
1   & 1              & 0                    & 0             \\
2   & 12             & 0                    & 0             \\
3   & 312            & 1                    & 1             \\
4   & 3412           & 2                    & 1             \\
5   & 35412          & 5                    & 3             \\
6   & 354162         & 10                   & 5
\end{tabular}
\end{table}

Let $\zer$ represent the number of zeros in the code of a permutation. Then we have the following proposition.

\begin{prop}
  Let $\s$ be a permutation in $\mathfrak{S}_n$. Then $\zer(\majcode(\s))=\rlmin(\s)$.\label{prop:zer=rlmin}
\end{prop}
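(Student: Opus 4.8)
The plan is to determine, for each index $j$, exactly when $m_j(\s)=0$, and to show that this happens precisely when the value $j$ is a right-to-left minimum of $\s$. Since $\zer(\majcode(\s))$ is the number of indices $j\in\{1,\dots,n\}$ with $m_j(\s)=0$, this equivalence gives $\zer(\majcode(\s))=\rlmin(\s)$ upon summing over $j$.

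To study $m_j(\s)=\maj(\sigma^{(j)})-\maj(\sigma^{(j-1)})$, I would regard $\sigma^{(j)}$ as obtained from $\sigma^{(j-1)}$ by inserting the value $j$ into the position it occupies among $1,\dots,j$ in $\s$. The key feature is that $j$ is the largest of these values, so it can only be a descent top, never a descent bottom. First I would treat the case where $j$ lands at the very end of $\sigma^{(j)}$: then the element to its left is followed by $j$, creating no descent, and nothing to the right of $j$ is moved, so the descent set and all descent positions of $\sigma^{(j-1)}$ are unchanged and $m_j(\s)=0$.

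Next I would handle the case where $j$ is inserted at a position $p$ that still has an element of $\sigma^{(j-1)}$ to its right. Three things happen to the major index: the descent of $\sigma^{(j-1)}$ spanning the gap into which $j$ is inserted, if one was present, is broken (losing its contribution $p-1$); a new descent is created immediately after $j$, contributing $p$; and every descent of $\sigma^{(j-1)}$ at a position $\geq p$ is shifted one step to the right, each contributing an extra $+1$. Combining these, $m_j(\s)$ equals the number of shifted descents plus either $p$ or $1$, according to whether no descent or one descent was broken. In either case $m_j(\s)\geq 1$, so $m_j(\s)=0$ if and only if $j$ occupies the last position of $\sigma^{(j)}$.

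Finally I would translate this positional condition into the language of right-to-left minima. The value $j$ is last among $1,\dots,j$ in $\s$ exactly when $j$ lies to the right of every smaller value, equivalently when no value less than $j$ occurs to its right, equivalently when every entry to the right of $j$ exceeds $j$. This last phrasing is exactly the statement that $j$ is a right-to-left minimum of $\s$. Hence $m_j(\s)=0$ precisely for those $j$ that are right-to-left minima of $\s$, and the proposition follows. I expect the descent bookkeeping in the third paragraph to be the main obstacle: one must check that splitting an existing descent cannot leave the major index unchanged, i.e.\ that the gain from the newly created descent and from the rightward shift always strictly outweighs the $p-1$ lost when a descent is broken.
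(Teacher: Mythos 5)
Your proof is correct and is essentially the paper's argument with the induction unrolled: the paper inducts on $n$, analyzing only the newest coordinate $m_{k+1}$ when the maximum value is reinserted, while you analyze each coordinate $m_j$ directly via the same insertion picture (namely, $m_j=0$ iff $j$ is inserted in the last position of $\s^{(j)}$, iff $j$ is a right-to-left minimum). Your descent bookkeeping --- a broken descent loses $p-1$, the new descent after $j$ gains $p$, and later descents each shift by $+1$, so $m_j\geq 1$ --- in fact supplies the detailed justification for the step the paper merely asserts, that inserting the maximum anywhere but the final position strictly increases $\maj$.
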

\begin{proof}
We'll use induction on $n$. When $n=1$, $\s=1$ has $\maj$-code equal to $0$ and the result clearly holds.

Now assume the theorem holds for $n\leq k$, and let $\s\in S_{k+1}$. Consider $\majcode(\s)=(m_1(\s), m_2(\s), \ldots, m_k(\s), m_{k+1}(\s))$. Let $\pi$ be the permutation obtained by removing $k+1$ from $\s$. By the induction hypothesis, $\zer(\majcode(\pi))=\rlmin(\pi)$. It is clear from the definition that $\majcode(\pi)=(m_1(\s), m_2(\s),\ldots, m_k(\s)).$ Now replace $k+1$ in its original position to form $\s$. If $k+1$ goes into any of the positions $1, 2, \ldots, k$ of $\pi$, it will increase $\maj$ and thus we'll have $m_{k+1}(\s)\ne 0$. So $\zer(\majcode(\s))=\zer(\majcode(\pi))$. It also will not increase the number of right-to-left minima, since $k+1$ will always have at least one of the smaller values $1, 2, \ldots, k$ to its right, so $\rlmin(\s)=\rlmin(\pi)$. Thus in this case, $\rlmin(\s)=\zer(\majcode(\s))$ as desired. If $k+1$ goes into position $k+1$ of $\pi$, it will not increase $\maj$ and thus we'll have $m_{k+1}(\s)=0$. So $\zer(\majcode(\s))=\zer(\majcode(\pi))+1$. It will also increase the number of right-to-left minima, since the last entry of a permutation is always a right-to-left minimum, so $\rlmin(\s)=\rlmin(\pi)+1$. Thus in this case $\zer(\majcode(\s))=\rlmin(\s)$, and the result is true in general as desired.
\end{proof}

Skandera~\cite{Skandera} also defines a function $\st:E_n\to\{0, 1, 2, 3, \ldots\}$ such that $$\st((e_1, e_2,\ldots, e_n))=\ell,$$ where $\ell$ is the largest number such that $(e_1, e_2,\ldots, e_n)$ contains a subsequence $(e_{j_1}, e_{j_2}, \ldots, e_{j_{\ell}})$ with the property that $e_{j_k}\geq k$ for $1\leq k\leq \ell$. Note there may be several subsequences with this property. For example $\st((0,0,1,1,3,5))=3$ with $(1,3,5)$ such a subsequence. Skandera proves the following useful result (see the proof of~\cite[Theorem 3.1]{Skandera}).

\begin{prop}
  For any $\s\in \mathfrak{S}_n$, $\st(\majcode(\s))=\des(\s)$.\label{prop:st=des}
\end{prop}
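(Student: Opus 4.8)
The plan is to prove the statement by induction on $n$, paralleling the proof of Proposition~\ref{prop:zer=rlmin}. The base case $n=1$ is immediate, since $\majcode(1)=(0)$ and $\st((0))=0=\des(1)$. For the inductive step I would let $\s\in\mathfrak{S}_n$, let $\pi\in\mathfrak{S}_{n-1}$ be the permutation obtained by deleting the entry $n$ from $\s$, and recall the structural observation already used above: since $\s^{(j)}=\pi^{(j)}$ for all $j\le n-1$, the code of $\s$ is obtained from the code of $\pi$ by appending a single new last entry, $\majcode(\s)=(m_1(\s),\dots,m_{n-1}(\s),m_n(\s))$ with $(m_1(\s),\dots,m_{n-1}(\s))=\majcode(\pi)$. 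By the inductive hypothesis $\st(\majcode(\pi))=\des(\pi)$.

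The first step is a lemma describing how $\st$ behaves under appending one entry to the right of a tuple $c\in E_{n-1}$. Any staircase subsequence of $(c,m_n)$ either lies entirely in $c$ or ends at the final entry; in the latter case deleting that entry leaves a staircase subsequence of $c$ of length one smaller. Hence $\st$ increases by at most one, and in fact
\begin{equation*}
\st((c,m_n))=
\begin{cases}
\st(c)+1, & m_n\ge \st(c)+1,\\
\st(c), & m_n\le \st(c),
\end{cases}
\end{equation*}
since a length-$\st(c)$ staircase subsequence of $c$ can be extended by $m_n$ exactly when $m_n\ge\st(c)+1$. Applying this with $c=\majcode(\pi)$ and using the inductive hypothesis, $\st(\majcode(\s))=\des(\pi)+1$ when $m_n(\s)\ge\des(\pi)+1$, and $\st(\majcode(\s))=\des(\pi)$ otherwise.

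The second step is to compute, in terms of the position $p$ at which $n$ sits in $\s$, both the increment $m_n(\s)=\maj(\s)-\maj(\pi)$ and the increment $\des(\s)-\des(\pi)$. Writing $D$ for the descent set of $\pi$ and $d=\des(\pi)=\#D$, inserting $n$ into position $p$ creates a descent at position $p$ whenever $p\le n-1$ (since $n$ exceeds its right neighbour) and destroys the descent of $\pi$ at position $p-1$ when $p-1\in D$ (since $n$ exceeds its left neighbour), while descents of $\pi$ to the right of the insertion are merely shifted by one position. A short bookkeeping of the affected positions gives $\des(\s)=\des(\pi)+1$ exactly when $p\le n-1$ and $p-1\notin D$, and $\des(\s)=\des(\pi)$ otherwise; the same bookkeeping yields $m_n(\s)=p+r$ in the former case, while in the latter $m_n(\s)=0$ if $p=n$ and $m_n(\s)=1+r$ if $p-1\in D$, where $r=\#\{j\in D:j\ge p\}$.

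Finally I would check that these two increments agree across the threshold $\des(\pi)+1$. Using $d=r+\#\{j\in D:j\le p-1\}$, the bound $\#\{j\in D:j\le p-1\}\le p-1$ gives $m_n=p+r\ge d+1$ in the descent-creating case; in the non-creating cases $m_n\le d$ holds either trivially (when $p=n$ and $m_n=0$) or because $p-1\in D$ forces $\#\{j\in D:j\le p-1\}\ge1$ and hence $m_n=1+r\le d$. Thus $m_n(\s)\ge\des(\pi)+1$ if and only if $\des(\s)=\des(\pi)+1$, so by the append lemma $\st(\majcode(\s))=\des(\s)$, completing the induction. I expect the main obstacle to be the second step: carefully tracking how a single insertion of $n$ simultaneously changes $\maj$ (through both the newly created descent and the re-indexing of the surviving descents) and $\des$, and packaging the result so that the comparison with the threshold $\des(\pi)+1$ is transparent. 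The $\st$ append lemma and the closing inequalities are routine once that insertion computation is in hand.
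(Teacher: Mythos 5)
Your proof is correct. Note that the paper itself supplies no argument for Proposition~\ref{prop:st=des}: it imports the statement from the proof of Theorem 3.1 of Skandera's paper, so your write-up is a self-contained proof rather than a variant of anything proved in this text. What you do is extend the paper's own inductive scheme for Proposition~\ref{prop:zer=rlmin} (delete the entry $n$ from $\s$ to get $\pi$, observe that $\majcode(\s)$ is $\majcode(\pi)$ with one entry $m_n(\s)$ appended, and analyze that last entry), but where the $\zer$/$\rlmin$ case only needed to know whether $m_n(\s)=0$, your statistic $\st$ requires two genuinely new ingredients, and both are handled correctly. The append lemma is sound: any staircase subsequence of $(c,m_n)$ of length $\st(c)+1$ must use the final entry, forcing $m_n\geq \st(c)+1$, and conversely a maximal staircase of $c$ extends exactly under that condition, so $\st$ jumps by one precisely when $m_n\geq\st(c)+1$. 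Your insertion bookkeeping is also right: with $n$ inserted at position $p$, the descent set of $\s$ is $\{j\in D:\, j\leq p-2\}$, together with $p$ when $p\leq n-1$, together with $\{j+1:\, j\in D,\ j\geq p\}$, which yields exactly your three values $m_n=p+r$, $m_n=1+r$, $m_n=0$ (the case $p=n$ automatically has $p-1=n-1\notin D$ and $r=0$, so the cases are exhaustive and consistent) and your criterion for $\des(\s)=\des(\pi)+1$. Finally the threshold comparison closes correctly: in the descent-creating case $\#\{j\in D:\, j\leq p-1\}\leq p-1$ gives $m_n=p+r\geq d+1$ (in fact $p-1\notin D$ even gives $m_n\geq d+2$, so your weaker bound is more than enough), while $m_n=0\leq d$ and $m_n=1+r\leq d$ hold in the two non-creating cases. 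A sanity check against the paper's running example confirms the formulas: for $\s=354162$, inserting $6$ into $\pi=35412$ at $p=5$ with $D(\pi)=\{2,3\}$ and $r=0$ gives $m_6=5$ and $\des$ rising from $2$ to $3$, matching $\majcode(354162)=(0,0,1,1,3,5)$ and $\st=3$. In short: a complete and correct proof of a statement the paper outsources to a reference, obtained by upgrading the paper's Proposition~\ref{prop:zer=rlmin} induction with an $\st$-append lemma and an exact computation of the last code entry.
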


We see that\ $\st(\majcode((354162)))=3=\des(354162)$. Propositions~\ref{prop:zer=rlmin} and~\ref{prop:st=des} provide our bijective method for finding Stirling-Euler-Mahonian mates for any Mahonian statistic $\mstat$ with a code $\mstatcode$, which we detail in the following theorem.
\begin{thm}
  Let $\mstat$ be a Mahonian statistic with permutation code $\mstatcode$. Then the triple $(\zer(\mstatcode),\st(\mstatcode), \mstat)$ is Stirling-Euler-Mahonian.\label{thm:SEM}
\end{thm}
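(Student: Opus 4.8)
The plan is to build a single bijection on $\mathfrak{S}_n$ that simultaneously transports all three statistics onto $(\rlmin, \des, \maj)$, exploiting the fact that $\zer$, $\st$, and $\add$ are functions defined directly on $E_n$ rather than on permutations. Since both $\mstatcode$ and $\majcode$ are bijections from $\mathfrak{S}_n$ onto $E_n$, I would form the composite $\phi = \majcode^{-1} \circ \mstatcode : \mathfrak{S}_n \to \mathfrak{S}_n$, which is itself a bijection.

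The key observation is that for any $\s \in \mathfrak{S}_n$, the tuple $e = \mstatcode(\s) \in E_n$ satisfies $\majcode(\phi(\s)) = e$, since $\phi(\s) = \majcode^{-1}(e)$. Consequently the three statistics match across $\s$ and $\phi(\s)$ as follows. First, $\zer(\mstatcode(\s)) = \zer(e) = \zer(\majcode(\phi(\s))) = \rlmin(\phi(\s))$ by Proposition~\ref{prop:zer=rlmin}. Second, $\st(\mstatcode(\s)) = \st(e) = \st(\majcode(\phi(\s))) = \des(\phi(\s))$ by Proposition~\ref{prop:st=des}. Third, because $\mstatcode$ is a permutation code for $\mstat$, we have $\mstat(\s) = \add(\mstatcode(\s)) = \add(e) = \add(\majcode(\phi(\s))) = \maj(\phi(\s))$, where the last equality holds since $\majcode$ is a permutation code for $\maj$.

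With these three pointwise identities, I would finish by summing over all permutations and changing the variable of summation. Writing $\tau = \phi(\s)$ and using that $\phi$ is a bijection on $\mathfrak{S}_n$, we obtain
\[
\sum_{\s \in \mathfrak{S}_n} y^{\zer(\mstatcode(\s))} x^{\st(\mstatcode(\s))} q^{\mstat(\s)}
= \sum_{\s \in \mathfrak{S}_n} y^{\rlmin(\phi(\s))} x^{\des(\phi(\s))} q^{\maj(\phi(\s))}
= \sum_{\tau \in \mathfrak{S}_n} y^{\rlmin(\tau)} x^{\des(\tau)} q^{\maj(\tau)},
\]
which is precisely the defining equation~(\ref{Def:CEM}) for a Stirling-Euler-Mahonian triple.

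I expect no serious obstacle here: essentially all of the work has already been carried out in Propositions~\ref{prop:zer=rlmin} and~\ref{prop:st=des}, and what remains is the single conceptual step of recognizing that $\zer$, $\st$, and $\add$ are evaluated on $E_n$, so that composing the two codes into one bijection on $\mathfrak{S}_n$ carries the entire triple at once. If any point deserves care, it is only making explicit that the equality of statistics holds pointwise under $\phi$ before summing, rather than merely in distribution.
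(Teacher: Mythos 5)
Your proof is correct and is essentially the same as the paper's: the paper composes the codes in the other direction, using $\codemap=(\mstatcode)^{-1}\circ\majcode$ and transporting $(\rlmin,\des,\maj)$ pointwise onto $(\zer(\mstatcode),\st(\mstatcode),\mstat)$, but this is just the inverse of your $\phi$ and the argument is otherwise identical, invoking Propositions~\ref{prop:zer=rlmin} and~\ref{prop:st=des} in the same way. Your explicit final step of summing over the bijection is left implicit in the paper but adds nothing new.
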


\begin{proof}
Since $\mstatcode: \mathfrak{S}_n\to E_n$ and $\majcode: \mathfrak{S}_n\to E_n$ are both bijections, the map $\codemap: \mathfrak{S}_n\to \mathfrak{S}_n$ defined by
$$\codemap=(\mstatcode)^{-1}\circ \majcode$$
is also a bijection. Then $\zer(\majcode(\s))=\zer(\mstatcode(\codemap(\s))$, so $\rlmin(\s)=\zer(\mstatcode(\codemap(\s)))$ by Proposition~\ref{prop:zer=rlmin}. Also $\st(\majcode(\s))=\st(\mstatcode(\codemap(\s)))$, so $\des(\s)=\st(\mstatcode(\codemap(\s)))$ by Proposition~\ref{prop:st=des}. Finally, $\add(\majcode(\s))=\add(\mstatcode(\codemap(\s)))$, so $\maj(\s)=\mstat(\codemap(\s))$. Thus this bijection has the property that
\begin{multline*}(\rlmin(\s), \des(\s), \maj(\s))=\\
(\zer(\mstatcode(\codemap(\s)),\st(\mstatcode(\codemap(\s))), \mstat(\codemap(\s))),\end{multline*}
for all $\s\in \mathfrak{S}_n$, making $(\zer(\mstatcode),\st(\mstatcode), \mstat)$ a Stirling-Euler-Mahonian triple as desired.
\end{proof}

Note that Theorem~\ref{thm:SEM} is an extension of Corollary 3.2 of~\cite{Skandera}. We will use Theorem~\ref{thm:SEM} throughout the rest of this paper, finding familiar interpretations for $\zer(\mstatcode)$ or $\st(\mstatcode)$ whenever possible.

\section{A Stirling-Euler-Mahonian triple for $\inv$}\label{Sec:Inv}

We begin with a simple application of Theorem~\ref{thm:SEM}. Let $\sigma\in\mathfrak{S}_n$ be a permutation. For each $j$, $1\leq j\leq n$, we can let
$$i_j(\s)=\#\{k>j \vert \sigma_{j} >\sigma_k\}.$$
The value $i_j(\s)$ is counting the number of inversions in $\sigma$ for which $\sigma_{j}$ is the bigger number. Then define $\invcode(\s)=(i_n(\s), i_{n-1}(\s), \ldots, i_1(\s))$. The $i_j$ terms are written with indices in descending order so that $\invcode(\s)$ is an element of $E_n$. Note that $\invcode$ is referred to as the Lehmer code in the literature~\cite{lehmer}. For example, $\invcode(341625)=(0, 0, 2, 0, 2, 2)$.

We state the following well-known theorem and give a brief proof.
 \begin{prop}For any $\s\in \mathfrak{S}_n$, $\zer(\invcode(\s))=\rlmin(\sigma)$.\label{prop:zerinvcode}\end{prop}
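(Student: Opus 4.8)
The plan is to show that the $j$th entry of $\invcode(\s)$ equals zero precisely when the value $\s_j$ is a right-to-left minimum of $\s$, which gives the result by counting. Recall that the entries of $\invcode(\s)=(i_n(\s), i_{n-1}(\s), \ldots, i_1(\s))$ are the quantities $i_j(\s)=\#\{k>j \mid \s_j>\s_k\}$, reordered but not otherwise altered; since $\zer$ simply counts how many entries of the tuple are zero, the reordering is irrelevant and it suffices to count the number of indices $j$ for which $i_j(\s)=0$.

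First I would unwind the two definitions in parallel. By definition $i_j(\s)=0$ means there is no position $k>j$ with $\s_j>\s_k$; equivalently, $\s_j<\s_k$ for every $k>j$ (the inequality is strict because all entries are distinct). But this last condition is exactly the statement that $\s_j$ is a right-to-left minimum of $\s$. So the two conditions ``$i_j(\s)=0$'' and ``$\s_j$ is a right-to-left minimum'' are literally the same condition, for each fixed $j$.

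Having matched the conditions position-by-position, I would then conclude by counting: the number of zero entries in $\invcode(\s)$ is $\#\{j \mid i_j(\s)=0\}$, which equals $\#\{j \mid \s_j \text{ is a right-to-left minimum}\}$, and this is precisely $\rlmin(\s)$ by definition. Hence $\zer(\invcode(\s))=\rlmin(\s)$, as claimed.

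Since the two defining conditions coincide verbatim, there is essentially no obstacle here; the only point requiring a word of care is the observation that the descending reindexing in the definition of $\invcode$ does not affect $\zer$, since that statistic is insensitive to the order of the entries. This is what makes the proof a genuinely ``brief'' one, in contrast to the inductive argument needed for the analogous Proposition~\ref{prop:zer=rlmin} about $\majcode$, where the zero entries do not admit such a direct positional interpretation.
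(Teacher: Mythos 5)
Your proof is correct and follows essentially the same route as the paper's: both observe that $i_j(\s)=0$ holds if and only if $\s_j$ is a right-to-left minimum, and then count. Your extra remark that the descending reindexing in $\invcode$ is irrelevant because $\zer$ is insensitive to the order of the entries is a reasonable point of care, but it is implicit in the paper's one-line argument.
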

 \begin{proof}
 We have $i_j(\s)=0$ if and only if there are no entries to right of $\s_j$ that are less than $\s_j$, which is true if and only if $\s_j$ is a right-to-left minimum.
 \end{proof}

 Skandera defines $\mathrm{stc}(\s)=\st(\invcode(\s))$ in~\cite{Skandera}. It is not known to be equal to any familiar Eulerian statistic calculated directly from the permutation $\s$.

 \begin{thm}
 The triple $(\rlmin, \mathrm{stc}, \inv)$ is Stirling-Euler-Mahonian.
 \end{thm}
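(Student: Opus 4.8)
The plan is to apply Theorem~\ref{thm:SEM} directly to the statistic $\inv$ using its permutation code $\invcode$. Since $\inv$ is Mahonian (stated in the introduction, due to Rodrigues) and $\invcode$ is a permutation code for it—that is, a bijection $\mathfrak{S}_n\to E_n$ with $\add(\invcode(\s))=\inv(\s)$—Theorem~\ref{thm:SEM} immediately gives that the triple $(\zer(\invcode),\st(\invcode),\inv)$ is Stirling-Euler-Mahonian. So the entire content of the proof is to verify the hypotheses of Theorem~\ref{thm:SEM} and then rewrite the resulting triple in the notation of the statement.

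First I would confirm that $\invcode$ satisfies the definition of a permutation code. That $\invcode$ is a bijection onto $E_n$ is the classical fact that the Lehmer code encodes permutations, and the defining inequality $0\le i_j(\s)\le j-1$ holds because $i_j(\s)$ counts entries smaller than $\s_j$ lying to its right, of which there can be at most $j-1$ once indices are reversed as in the definition. The equality $\add(\invcode(\s))=\inv(\s)$ is immediate, since summing $i_j(\s)$ over all $j$ counts every inversion exactly once (each inversion is counted at the position of its larger entry). With these two facts in hand, $\invcode$ is a genuine permutation code for $\inv$.

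Next I would simply invoke Theorem~\ref{thm:SEM}, which yields that $(\zer(\invcode),\st(\invcode),\inv)$ is Stirling-Euler-Mahonian. To match the claimed form of the triple, I would then substitute the two identifications already established in this section: Proposition~\ref{prop:zerinvcode} gives $\zer(\invcode(\s))=\rlmin(\s)$ for all $\s$, so the first coordinate is exactly $\rlmin$; and $\mathrm{stc}$ is defined precisely as $\mathrm{stc}(\s)=\st(\invcode(\s))$, so the second coordinate is $\mathrm{stc}$. Making these replacements turns $(\zer(\invcode),\st(\invcode),\inv)$ into $(\rlmin,\mathrm{stc},\inv)$, which is the desired conclusion.

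I do not anticipate a genuine obstacle here, since all the heavy lifting has been done in the earlier propositions and in Theorem~\ref{thm:SEM}; the proof is essentially a matter of checking that the pieces fit together. The only point requiring a small amount of care is the verification that $\invcode$ lands in $E_n$ and is a bijection—but as noted this is the standard Lehmer code result and can be cited rather than reproven. Everything else is a direct substitution of known equalities into the output of Theorem~\ref{thm:SEM}.
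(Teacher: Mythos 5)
Your proposal is correct and matches the paper's proof exactly: the paper likewise derives the result directly from Theorem~\ref{thm:SEM} together with Proposition~\ref{prop:zerinvcode}, using the definition $\mathrm{stc}(\s)=\st(\invcode(\s))$. Your additional verification that $\invcode$ is a genuine permutation code (the Lehmer code bijection with $\add(\invcode(\s))=\inv(\s)$) is a harmless elaboration of facts the paper cites rather than reproves.
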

 \begin{proof}
 Follows directly from Theorem~\ref{thm:SEM} and Proposition~\ref{prop:zerinvcode}.
 \end{proof}

 \section{Two Stirling-Euler-Mahonian triples for $\den$}\label{Sec:Den}

 \subsection{Two definitions of $\den$}\label{Sec:IntroToDEN}

\noindent \textit{Denert's statistic} $\den$ has two equivalent definitions, both of which will be useful to us. The first definition is given by
\begin{multline}\den(\sigma)=\#\{j<k \, \vert \, \sigma_k<\sigma_j\leq k\}+\\ \#\{j<k \,\vert \, \sigma_j\leq k<\sigma_k\}+\#\{j<k \, \vert \, k<\sigma_k<\sigma_j\}.\label{eq:den1stdef}\end{multline}

The second formulation of $\den$ is easier to work with, but first requires some definitions. For a permutation $\s$, the \textit{excedance subword} $\s_E$ is the subword of all excedances tops in the order that they occur in $\s$, and the \textit{non-excedance subword} $\s_N$ is the subword of all non-excedance tops in the order that they occur in $\s$. Then
\begin{equation}\den(\s)=\inv(\s_E)+\inv(\s_N)+\sum_{\s_j>j}j.\label{eq:den2nddef}\end{equation}
Note the last term in (\ref{eq:den2nddef}) is the sum of the excedances of $\s$. See~\cite{FZ90} for a proof of the equivalence of definitions (\ref{eq:den1stdef}) and (\ref{eq:den2nddef}), along with a proof that $\den$ is Mahonian.

For example, for $\s=354162$ using the first definition of $\den$ we get $\den(354162)=6+5+1=12$. Alternately using the second definition, we have $\s_E=3546$, $\s_N=12$, $\inv(\s_E)=1$, and $\inv(\s_N)=0$, so $\den(354162)=1+0+1+2+3+5=12$.

 Denert's statistic has two known permutation codes, which lead to different results related to $\den$ and Stirling-Euler-Mahonian triples.

 \subsection{$\dencode$}
 The first code for $\den$ comes from Foata and Zeilberger~\cite{FZ90}, which we will denote $\dencode$. It is based on the definition of $\den$ given in (\ref{eq:den1stdef}). We let

\[d_j(\s)= \begin{cases}
      \#\{k<j \, \vert \, \s_j<\s_k\leq j\} & \mathrm{if} \, \s_j\leq j \\
      \#\{k<j \, \vert \, \s_k\leq j\} + \#\{k<j \, \vert \, \s_j<\s_k\} & \mathrm{if} \, \s_j>j
   \end{cases}.
\]
Note the value of $d_j(\s)$ is just the contribution of position $j$ to $\den(\s)$ from (\ref{eq:den1stdef}). Using this fact it is easy to see that $0\leq d_j(\s)\leq j-1$. We can then define $\dencode(\s)=(d_1(\s), d_2(\s),\ldots, d_n(\s))$. For example, $\dencode(341625)=(0, 0, 0, 3, 2, 1)$. This map is shown to be invertible in~\cite{FZ90}, making it clear that $\dencode:  \mathfrak{S}_n\to E_n$ is a bijection. We have the following theorem.

\begin{thm}
The triple $(\zer(\dencode), \st(\dencode), \den)$ is Stirling-Euler-Mahonian.
\end{thm}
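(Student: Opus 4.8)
The plan is to apply Theorem~\ref{thm:SEM} directly. The theorem states that for \emph{any} Mahonian statistic $\mstat$ equipped with a permutation code $\mstatcode$, the triple $(\zer(\mstatcode), \st(\mstatcode), \mstat)$ is automatically Stirling-Euler-Mahonian. Since Denert's statistic $\den$ is Mahonian (proven in~\cite{FZ90}) and $\dencode : \mathfrak{S}_n \to E_n$ has already been established to be a permutation code for $\den$ (it is a bijection by~\cite{FZ90}, and by construction $\add(\dencode(\s)) = \den(\s)$ since $d_j(\s)$ is precisely the contribution of position $j$ to $\den(\s)$), the hypotheses of Theorem~\ref{thm:SEM} are satisfied with $\mstat = \den$ and $\mstatcode = \dencode$. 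The conclusion is immediate.

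Concretely, I would write a one-line proof: \emph{This follows directly from Theorem~\ref{thm:SEM}, taking $\mstat = \den$ and $\mstatcode = \dencode$.} The only thing one must verify is that $\dencode$ genuinely qualifies as a permutation code in the sense of Section~\ref{Sec:Codes}, namely that it is a bijection onto $E_n$ and satisfies $\add(\dencode(\s)) = \den(\s)$. Both points are already recorded in the preceding subsection, so no new work is required here.

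There is essentially no obstacle, since the heavy lifting was done once and for all in Theorem~\ref{thm:SEM} via Propositions~\ref{prop:zer=rlmin} and~\ref{prop:st=des}. The only subtlety worth a remark is that, unlike the $\inv$ case where $\zer(\invcode)$ was identified with the familiar statistic $\rlmin$, here the two statistics $\zer(\dencode)$ and $\st(\dencode)$ are delivered by the theorem without an a priori combinatorial interpretation in terms of $\s$ itself. Following the stated program of the paper (finding familiar interpretations for $\zer(\mstatcode)$ or $\st(\mstatcode)$ whenever possible), I would expect the real content of this section to lie not in proving the triple is Stirling-Euler-Mahonian, but in the subsequent identification of $\zer(\dencode)$ or $\st(\dencode)$ with a natural statistic on $\s$. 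That identification, however, is a separate task from the theorem as stated, whose proof is a direct invocation of Theorem~\ref{thm:SEM}.
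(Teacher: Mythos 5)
Your proposal is correct and matches the paper's proof exactly: the paper also proves this theorem by direct invocation of Theorem~\ref{thm:SEM}, with the prerequisites (that $\dencode$ is a bijection onto $E_n$ and that $\add(\dencode(\s))=\den(\s)$) established in the preceding subsection just as you note. Your closing remark is also consonant with the paper, which reports that FindStat searches turned up no familiar interpretations for $\zer(\dencode)$ or $\st(\dencode)$.
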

\begin{proof}
  Follows directly from Theorem~\ref{thm:SEM}.
\end{proof}
We checked the values of the statistics $\zer(\dencode)$ and $\st(\dencode)$ on $\mathfrak{S}_n$ for $n=2, 3, 4$ in the FindStat database for combinatorial statistics~\cite{FindStat} and received no direct matches for either. This provides evidence that $\zer(\dencode)$ is not equal to any familiar Stirling statistic, nor is $\st(\dencode)$ equal to any familiar Eulerian statistic. 

\subsection{$\hancode$}
The second code for $\den$ is due to Han~\cite{Han}, so we will denote it $\hancode$. It is based on the definition of $\den$ given in (\ref{eq:den2nddef}). It has a complicated recursive definition as a function $E_n\to \mathfrak{S}_n$ (instead of $\mathfrak{S}_n\to E_n$ as we have seen with other examples above). But since the function defined is a bijection, it still represents a pairing between a permutation $\s\in \mathfrak{S}_n$ and a unique element $\hancode(\s)\in E_n$. We will describe Han's recursive process.

When $n=1$, the only permutation in $\mathfrak{S}_1$ is $\s=1$ and $\den(\s)=0$, so we have $$\hancode(\s)=0.$$

For $n\geq 2$, Han defines a function $\Psi: \mathfrak{S}_{n-1}\times \{0, 1, \ldots , n-1\}\to \mathfrak{S}_n$ which allows us to recursively find the permutation associated to a given $\hancode$ in $E_n$. Let $\s=\s_1\s_2\cdots \s_{n-1}\in \mathfrak{S}_{n-1}$. If $s=0$, then $\Psi(\s,s)=\s_1\s_2\cdots\s_{n-1}n$. 

If $s\ne 0$, first we must compute the $\nu(\s_r)$ values of $\s$ for $1\leq r\leq n-1$, where
\small\[\nu(\s_r)= \begin{cases}
      \#\{j\geq \s_r \vert j \ \textrm{is an excedance of} \ \s\} &
       \textrm{if} \ r \ \textrm{is an excedance of} \ \s \\
      \#\{j\leq \s_r \vert j \ \textrm{is a non-excedance of} \ \s\}+\exc(\s) & \textrm{if} \ r \ \textrm{is a non-excedance of} \ \s
   \end{cases}
\]
\normalsize Note the $\nu(\s_r)$ values equal to $1, 2, \ldots, \exc(\s)$ will correspond to the excedance tops of $\s$ from largest to smallest, and the $\nu(\s_r)$ values equal to $exc(\s)+1, \ldots, n$ will correspond to the non-excedance tops of $\s$ from smallest to largest. Next underline each excedance top $\s_j$ such that $\s_j\geq \nu^{-1}(s)$, and let

\begin{equation}\{\nu^{-1}(1), \nu^{-1}(2), \ldots , \nu^{-1}(m)\}\label{SetNuValues}\end{equation} 

\noindent be these underlined values. Then swap excedance tops in the following fashion: $n$ replaces $\nu^{-1}(1)$ (the largest underlined value, from the remarks above), $\nu^{-1}(1)$ replaces $\nu^{-1}(2)$, \ldots , $\nu^{-1}(m-1)$ replaces $\nu^{-1}(m)$. Finally, insert the value $p=\nu^{-1}(m)$ in position $\nu^{-1}(s)$. Then $\Psi(\s,s)$ is the permutation obtained through this procedure.

\begin{table}[]
\caption{$\nu$-values for $\s=341625$}
\label{table:nu example}
\begin{tabular}{rllllll}
$j$              & $1$ & $2$ & $3$ & $4$ & $5$ & $6$ \\
$\sigma_j$      & $3$ & $4$ & $1$ & $6$ & $2$ & $5$\\
$\nu(\sigma_j)$ & $3$ & $2$ & $4$ & $1$ & $5$ & $6$
\end{tabular}
\end{table}

Let $\s=341625$ and note that $\exc(\s)=3$. The $\nu$-values for this $\s$ are shown in Table~\ref{table:nu example}. We will consider what happens for this $\s$ in three cases:  when $s=0$, when $0<s\leq \exc(\s)$, and when $s>\exc(\s)$. 

For the first example when $s=0$, we have that $\Psi(341625,0)=3416257$. Note that $\den(\Psi(341625,0))=\den(341625)+0$ and $\exc(\Psi(341625,0))=\exc(341625)$.

The second example we consider is $\Psi(341625,2)$ (so $s=2$). Note that $2\leq \exc(341625)$. We first see that $\nu^{-1}(2)=4$, and the excedance tops of $\s$ are $3$, $4$, and $6$. Of these only $4$ and $6$ are greater than or equal to $\nu^{-1}(2)$, so we underline these values:  $3\b41\b625$. Then we put $7$ in the place of the largest excedance top $6$, $6$ in place of the second largest excedance top $4$, and then we insert $p=4$ in the position $\nu^{-1}(2)=4$. Thus $\Psi(341625,2)=3614725$. Note that $\den(\Psi(341625,2))=\den(341625)+2$ and $\exc(\Psi(341625,2))=\exc(341625)$.

The third example we consider is $\Psi(341625,5)$ (so $s=5$). Note that $5>\exc(341625)$. We first see that $\nu^{-1}(5)=2$. The excedance tops of $\s$ are $3$, $4$, and $6$, all of which are greater than or equal to $\nu^{-1}(5)$, so we underline these values:  $\b3\b41\b625$. Then we put $7$ in the place of the largest excedance top $6$, $6$ in place of the second largest excedance top $4$, $4$ in place of the smallest excedance top $3$, and then we insert $p=3$ in the position $\nu^{-1}(5)=2$, creating a new excedance in the resulting permutation. Thus $\Psi(341625,5)=4361725$. Note that $\den(\Psi(341625,2))=\den(341625)+5$ and $\exc(\Psi(341625,5))=\exc(341625)+1$.

For a permutation $\s\in\mathfrak{S}_1$, $\hancode(\s)$ is the unique sequence of $s$ values that generate $\s$ by repeatedly applying the function $\Psi$, starting with the unique permutation $\alpha=1$ in $\mathfrak{S}_n$. We previously saw that$\hancode(\alpha)=0$. It is then easy to verify that $\Psi(\alpha,0)=12$ and $\Psi(\alpha, 1)=21$. Thus $\hancode(12)=00$ and $\hancode(21)=01$. We can subsequently show that $\Psi(12, 0)=123$ so $\hancode(123)=000$, $\Psi(12,1)=312$ so $\hancode(312)=001$, etc. For reference we give all the $\hancode$ values for $\mathfrak{S}_3$ in Table~\ref{table:HancodeS_3}. 

Note that by Proposition 2.2 of~\cite{Han}, the function $\Psi$ satisfies that $\den(\Psi(\s,s))=\den(\s)+s$ (which guarantees that $\add(\hancode(\s))=\den(\s)$) and that
\[\exc(\Psi(\s,s))= \begin{cases}
      \exc(\s) & \textrm{if} \ s\leq\exc(\s) \\
      \exc(\s)+1 & \textrm{if} \ s>\exc(\s). \\
   \end{cases}
\]
It is also true by definition of $\st$ that
\[\st(\hancode(\Psi(\s,s)))= \begin{cases}
      \st(\hancode((\s))) & \textrm{if} \ s\leq\st(\hancode((\s))) \\
      \st(\hancode((\s)))+1 & \textrm{if} \ s>\st(\hancode((\s))), \\
   \end{cases}
\]
which makes it easy to prove the following Proposition.

\begin{prop}
For any $\s\in \mathfrak{S}_n$, $\st(\hancode(\s))=\exc(\s)$.\label{prop:sthancode}
\end{prop}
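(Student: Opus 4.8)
The plan is to prove the proposition by induction on $n$, exploiting the fact that the two displayed recurrences immediately preceding the statement have \emph{identical} structure. The first governs $\exc(\Psi(\s,s))$ and the second governs $\st(\hancode(\Psi(\s,s)))$, and each one increases its value by exactly $1$ precisely when $s$ exceeds a threshold — that threshold being $\exc(\s)$ in the first case and $\st(\hancode(\s))$ in the second. Once the inductive hypothesis equates these two thresholds, the two recurrences must produce the same output, and the induction closes.

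For the base case $n=1$, the only permutation is $\s=1$, for which $\hancode(\s)=0$. The one-term sequence $(0)$ has no subsequence $(e_{j_1})$ with $e_{j_1}\geq 1$, so $\st(\hancode(1))=0$; and since $\s_1=1$ is not an excedance, $\exc(1)=0$, so the two statistics agree. For the inductive step I would assume the result on $\mathfrak{S}_{n-1}$ and take $\s\in\mathfrak{S}_n$ with $n\geq 2$. Because $\hancode$ is constructed by repeatedly applying $\Psi$ and is a bijection, there are a unique $\pi\in\mathfrak{S}_{n-1}$ and a unique $s\in\{0,1,\ldots,n-1\}$ with $\s=\Psi(\pi,s)$; concretely, $\hancode(\s)$ is $\hancode(\pi)$ with $s$ appended as its final entry. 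The inductive hypothesis gives $\st(\hancode(\pi))=\exc(\pi)$, so the branching condition $s\leq\exc(\pi)$ in the excedance recurrence coincides with the condition $s\leq\st(\hancode(\pi))$ in the $\st$ recurrence. Hence both $\exc(\Psi(\pi,s))$ and $\st(\hancode(\Psi(\pi,s)))$ equal $\exc(\pi)$ when $s\leq\exc(\pi)$ and equal $\exc(\pi)+1$ otherwise, yielding $\st(\hancode(\s))=\exc(\s)$ and completing the induction.

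The genuine content here has already been supplied in the excerpt — the excedance recurrence is Han's Proposition 2.2, and the $\st$ recurrence follows from the definition of $\st$ together with the fact that $\hancode(\Psi(\s,s))$ merely appends $s$ to $\hancode(\s)$ — so the argument itself is essentially bookkeeping, and I would not expect any serious obstacle. The one point I would verify carefully before invoking it is the $\st$ recurrence: appending a new final coordinate $s$ to a code with $\st$-value $m$ should raise the statistic by one exactly when $s\geq m+1$ (the appended entry sits last, so it can only extend an optimal witnessing subsequence at its end) and leave it unchanged otherwise. Confirming that this matches the displayed recurrence is the only step requiring genuine attention; everything else is a direct comparison of two parallel inductions anchored at the same base value.
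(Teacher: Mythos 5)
Your proposal is correct and follows essentially the same route as the paper: induction on $n$, anchoring at $\s=1$, decomposing $\s=\Psi(\pi,s)$, and using the inductive hypothesis to equate the thresholds in the two parallel recurrences for $\exc(\Psi(\pi,s))$ and $\st(\hancode(\Psi(\pi,s)))$. Your extra care in justifying the $\st$ recurrence (that appending $s$ raises $\st$ by exactly one iff $s$ exceeds the current value, since an appended final entry can only extend a witnessing subsequence at its end) is a point the paper simply asserts ``by definition of $\st$,'' so if anything your write-up is slightly more complete.
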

\begin{proof}
We'll use induction on $n$. When $n=1$ the only permutation is $\s=1$, which has $\hancode(\s)=0$. Thus $\st(\hancode(\s))=0=\exc(\s)$ and the result is true when $n=1$.

Now assume the result holds for $n\leq k$, and let $\s\in  \mathfrak{S}_{k+1}$. Then there is a $\pi\in \mathfrak{S}_k$ and $s$ with $0\leq s\leq k$ such that $\s=\Psi(\pi, s)$. By the induction hypothesis, $\st(\hancode(\pi))=\exc(\pi)$. If $s\leq \st(\hancode(\pi))=\exc(\pi)$, then 
\begin{multline*}
  \st(\hancode(\s))=\st(\hancode(\Psi(\pi,s))=\\\st(\hancode(\pi))=\exc(\pi)=\exc(\Psi(\pi,s))=\exc(\s) 
\end{multline*}
  If $s> \st(\hancode(\pi))=\exc(\pi)$, the result follows similarly.
\end{proof}

We have a simple interpretation for $\st(\hancode(\s))$ in terms of $\s$. Our goal is to find the same for $\zer(\hancode(\s))$. In doing so, we will make use of the following lemma.

\begin{table}[]
\caption{$\hancode$ values for $\mathfrak{S}_3$}
\label{table:HancodeS_3}
\begin{tabular}{cc}
$\s$ & $\hancode(\s)$ \\
123               & 000                                        \\
132               & 002                                        \\
213               & 010                                        \\
231               & 012                                        \\
312               & 001                                        \\
321               & 011                                       
\end{tabular}
\end{table}

 \begin{lem}
Let $\s$ be a permutation in $\mathfrak{S}_n$. If $\s_i$ is an excedance top of $\s$, then $\s_i$ is not a right-to-left minimum of $\s$.\label{lem:exc-rlmin}
\end{lem}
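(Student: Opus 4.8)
The plan is to prove the contrapositive-style statement directly by unpacking the two defining properties in question. Suppose $\s_i$ is an excedance top of $\s$, meaning that position $i$ is an excedance, so $\s_i > i$ by the definition of excedance given in the introduction. I want to show $\s_i$ cannot be a right-to-left minimum, i.e., that there exists some position $k > i$ with $\s_k < \s_i$.

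The key observation is a counting argument on the values that must appear to the right of position $i$. Since $\s_i > i$, the value $\s_i$ is at least $i+1$. Now consider the set of values $\{1, 2, \ldots, i\}$, all of which are strictly smaller than $\s_i$. There are exactly $i$ such values. These values occupy $i$ positions among $\{1, 2, \ldots, n\}$, but position $i$ itself is occupied by $\s_i$, which is \emph{not} one of these small values. Hence all $i$ of the values $\{1, \ldots, i\}$ lie in positions other than $i$. The positions strictly before $i$ are $\{1, \ldots, i-1\}$, of which there are only $i-1$. By pigeonhole, since there are $i$ small values but at most $i-1$ slots available to the left of position $i$, at least one of the values in $\{1, \ldots, i\}$ must lie in a position $k > i$. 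That value is strictly less than $\s_i$ and sits to the right of $\s_i$, so $\s_i$ fails the right-to-left minimum condition.

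I would write this up as two or three sentences: state that $\s_i > i$, count that the $i$ values $1, \ldots, i$ cannot all fit in the $i-1$ positions preceding position $i$ together with position $i$ (which is taken by $\s_i > i$), conclude one such value lies to the right, and hence $\s_i$ is not a right-to-left minimum. No induction is needed here, in contrast to the preceding propositions; this is a one-shot pigeonhole argument.

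The only real subtlety to get right is the bookkeeping of the pigeonhole count — specifically being careful that position $i$ is excluded from hosting any of the values $1, \ldots, i$ (because it holds $\s_i$), so the available left-hand positions number $i-1$ rather than $i$. I do not anticipate a genuine obstacle; the main thing is to phrase the counting cleanly so the reader sees immediately why at least one small value is forced to the right of $\s_i$.
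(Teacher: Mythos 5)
Your proof is correct and takes essentially the same approach as the paper's: a one-shot pigeonhole count showing that some value smaller than $\s_i$ is forced to appear to its right. The only cosmetic difference is that you count the $i$ small values $1,\ldots,i$ against the $i-1$ positions strictly left of position $i$, whereas the paper counts the $n-\s_i$ values larger than $\s_i$ against the $n-i$ positions to the right --- complementary formulations of the same argument.
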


\begin{proof} If $\s_i>i$, note that $n-\s_i<n-i$. Then the $n-\s_i$ values larger than $\s_i$ cannot occupy all of the $n-i$ positions in $\s$ to the right of $\s_i$. Thus there must be a value to the right of $\s_i$ that is less than $\s_i$. Hence $\s_i$ cannot be a right-to-left minimum of $\s$.\end{proof}

 \begin{thm}
   Let $\s$ be a permutation in $\mathfrak{S}_n$. Then $\zer(\hancode(\s))=\rlmin(\s)$.\label{theorem:zerhancode}
 \end{thm}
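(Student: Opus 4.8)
The plan is to mimic the inductive argument used for Proposition~\ref{prop:zer=rlmin}, since $\hancode$ is defined recursively through the map $\Psi$ in exactly the way that invites induction on $n$. The base case $n=1$ is immediate: $\hancode(1)=0$, so $\zer(\hancode(1))=1=\rlmin(1)$. For the inductive step I would take $\s\in\mathfrak{S}_{k+1}$, write $\s=\Psi(\pi,s)$ for a unique $\pi\in\mathfrak{S}_k$ and $0\le s\le k$, and use that $\hancode(\s)$ is $\hancode(\pi)$ with the single new letter $s$ appended. Thus $\zer(\hancode(\s))=\zer(\hancode(\pi))+[s=0]$, and by the induction hypothesis $\zer(\hancode(\pi))=\rlmin(\pi)$. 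The entire theorem therefore reduces to showing that appending $s$ changes $\rlmin$ by exactly $[s=0]$, i.e.\ that $\rlmin(\Psi(\pi,s))=\rlmin(\pi)+[s=0]$.

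First I would dispose of the easy case $s=0$. Here $\Psi(\pi,s)=\pi_1\cdots\pi_k(k+1)$, which appends the largest value $k+1$ at the end. Since $k+1$ is the maximum it is never a right-to-left minimum, and placing it at the very end does not disturb any right-to-left minimum of $\pi$ (nothing lies to its right to threaten it, and it lies to the right of everything but exceeds all of them). Hence $\rlmin(\Psi(\pi,0))=\rlmin(\pi)=\rlmin(\pi)+[0=0]$ fails sign-wise, so I must be careful: in fact when $s=0$ the last entry $k+1$ is \emph{not} a right-to-left minimum, yet the code gains a zero. This is the crux, so let me treat it correctly below.

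The main obstacle — and the place where the $\den$-code structure really matters, in contrast to the $\maj$-code where the appended value was literally the new maximum in its last position — is reconciling the bookkeeping of the $\Psi$ operation (which swaps excedance tops and inserts a value, not simply appending $k+1$) with the right-to-left minima count. The key tool is Lemma~\ref{lem:exc-rlmin}: every value that $\Psi$ promotes into an excedance top position is an excedance top and hence is \emph{not} a right-to-left minimum, so the cyclic relabeling of excedance tops performed by $\Psi$ can only affect non-right-to-left-minima and cannot alter the set of right-to-left minima coming from non-excedance positions. What I would show is the dichotomy: if $s=0$ we genuinely append $k+1$ at the end (a new last entry), and one must verify this adds exactly one right-to-left minimum relative to $\pi$ read in $\mathfrak{S}_k$ — consistent with the code gaining a zero — whereas if $s\ge 1$ the inserted value together with the reshuffled excedance tops leaves $\rlmin$ unchanged, matching $[s=0]=0$. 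I expect the hard part to be this second case: I would argue via Lemma~\ref{lem:exc-rlmin} that the newly inserted value $p=\nu^{-1}(m)$ lands in an excedance-creating or excedance-preserving position (per the $\exc$ recursion already recorded), so $p$ is not a right-to-left minimum of $\s$, and the swaps only move values among positions that were and remain non-right-to-left-minima, so no new right-to-left minimum is created and none is destroyed.

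To make the second case rigorous I would pin down, using the explicit description of $\Psi$ for $s\neq 0$, that the only positions altered are the excedance-top positions $\nu^{-1}(1),\dots,\nu^{-1}(m)$ together with the insertion position $\nu^{-1}(s)$, and that after the operation each of these carries an excedance top of $\s$; by Lemma~\ref{lem:exc-rlmin} none of these can be a right-to-left minimum of $\s$. Simultaneously I would check that the relative order of the \emph{non}-excedance tops (the potential right-to-left minima) is preserved by $\Psi$, so that the right-to-left minima of $\s$ are in bijection with those of $\pi$. Combining, $\rlmin(\Psi(\pi,s))=\rlmin(\pi)$ when $s\ge1$, and $\rlmin(\Psi(\pi,0))=\rlmin(\pi)+1$, which is exactly $\rlmin(\pi)+[s=0]=\zer(\hancode(\pi))+[s=0]=\zer(\hancode(\s))$, completing the induction.
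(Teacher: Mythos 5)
Your overall scaffolding (induction on $n$ via $\s=\Psi(\pi,s)$, with $\zer(\hancode(\s))=\zer(\hancode(\pi))+[s=0]$, reducing everything to $\rlmin(\Psi(\pi,s))=\rlmin(\pi)+[s=0]$) matches the paper's proof. But there is a genuine gap in your main case, and it sits exactly where the paper's proof does its real work. You claim that for $s\geq 1$ the only altered positions are the excedance-top positions together with the insertion position, and that \emph{after} the operation each of these carries an excedance top of $\s$, so Lemma~\ref{lem:exc-rlmin} rules them all out as right-to-left minima. This is false when $0<s\leq\exc(\pi)$: in that case $m=s$ and the value $p=\nu^{-1}(s)$ is inserted at position $\nu^{-1}(s)=p$, so $p$ becomes a \emph{fixed point} of $\s$, not an excedance top (consistent with the recorded recursion $\exc(\Psi(\pi,s))=\exc(\pi)$: one old excedance top is demoted while $k+1$ is promoted). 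Lemma~\ref{lem:exc-rlmin} says nothing about fixed points, and fixed points can certainly be right-to-left minima (e.g., the value $1$ in $132$). So your appeal to the lemma does not rule out $p$, and an extra argument is required: the paper shows $p$ cannot be a right-to-left minimum of $\s$ because in the swap $p$ (an excedance top of $\pi$, hence sitting left of position $p$) is replaced by the next-larger excedance top $\nu^{-1}(s-1)>p$, which therefore occupies a position $\leq p-1$ in $\s$; since a value larger than $p$ lies to the left of position $p$, not all of $p+1,\ldots,k+1$ can lie to its right, forcing a value smaller than $p$ to the right of $p$. Your argument does go through in the case $s>\exc(\pi)$, where the insertion genuinely creates a new excedance.

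There is also a self-contradiction in your easy case $s=0$. You assert that the appended $k+1$ ``is \emph{not} a right-to-left minimum,'' yet your bookkeeping requires $\rlmin(\Psi(\pi,0))=\rlmin(\pi)+1$. Under the paper's definition the last entry of a permutation is \emph{always} a right-to-left minimum (vacuously, as nothing lies to its right) --- this is stated explicitly in the proof of Proposition~\ref{prop:zer=rlmin} --- and since $k+1$ exceeds every entry of $\pi$, appending it destroys no existing right-to-left minimum. That gives the needed $+1$ cleanly; your proposal states the right conclusion here but with reasoning that contradicts it and is never resolved.
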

 \begin{proof}
 We'll prove this by induction on $n$. When $n=1$ the only permutation is $\s=1$, which has $\hancode(\s)=0$. The result clearly holds in this case.

 Now assume the theorem holds for $n\leq k$, and let $\s\in S_{k+1}$. Then we know there is a $\pi\in S_k$ and $0\leq s\leq k$ such that $\s=\Psi(\pi,s)$. We also know by the induction hypothesis that $\zer(\hancode((\pi))=\rlmin(\pi)$. Let $\pi=\pi_1\pi_2\cdots \pi_k$. We'll consider three cases.

 \noindent{\bf Case 1:  $s=0$}

 By definition of $\hancode$, $\s=\pi_1\pi_2\cdots \pi_k (k+1)$ and $k+1$ is a right-to-left minimum of $\s$. Thus $\rlmin(\s)=\rlmin(\pi)+1$. Since $s=0$, we also have $\zer(\hancode((\s))=\zer(\hancode((\pi))+1$. Hence $\zer(\hancode((\s))=\rlmin(\s)$ in this case.

 \noindent{\bf Case 2:  $0<s\leq\exc(\pi)$}

  Since $s>0$, we have $\zer(\hancode((\s))=\zer(\hancode((\pi))$. By the induction hypothesis, $\zer(\hancode((\pi))=\rlmin(\pi)$. By definition of the Han code, $\nu^{-1}(s)$ is the $s$th largest excedance top of $\pi$. Note that since $0<s\leq\exc(\pi)$, the value $m$ in (\ref{SetNuValues}) is equal to $s$. In order to produce $\s$, $k+1$ replaces the largest excedance top of $\pi$, the largest excedance top replaces the second largest excedance top, continuing in this way until $\nu^{-1}(s)$ is reached, and $p=\nu^{-1}(s)$ is inserted in position $p$. None of these excedances tops in $\s$ can be right-to-left minima by Lemma~\ref{lem:exc-rlmin}. The relative positions of the non-excedances tops of $\pi$ stay the same in $\s$. The value of each excedance top of $\pi$ is increased in $\s$, which has no effect on the right-to-left minima in the non-excedance positions. So the only possible way that the number of right-to-left minima of $\s$ could increase over that of $\pi$ is if $p$ were a right-to-left minimum of $\s$. We will show that this is not possible.
 
 Since $p$ is position $p$ of $\s$, the only way it could be a right-to-left minimum of $\s$ is if the values $p+1, p+2, \ldots , k+1$ were all to the right of it. The value $p$ is an excedance top in the permutation $\pi$, so it must be to the left of position $p$ in $\pi$. In the swapping algorithm of $\Psi$ that produces $\s$, $p$ is replaced with $\nu^{-1}(s-1)$, which is the next larger excedance top of $\pi$. Thus we must have $\nu^{-1}(s-1)>p$ and $\nu^{-1}(s-1)$ must be in position $p-1$ or smaller in $\s$. Thus there is a value to the left of $p$ in $\s$ that is larger than $p$. So not all of the values $p+1, p+2, \ldots , k+1$ can be to the right of $p$ in $\s$. This fact forces there to be a value to the right of $p$ in $\s$ that is smaller than $p$. Thus $p$ cannot be a right-to-left minimum of $\s$. Therefore, $\rlmin(\s)=\rlmin(\pi)$ and hence $\zer(\hancode((\s))=\rlmin(\s)$ as desired.

  \noindent{\bf Case 3:  $s> \exc(\pi)$}

  Again $s>0$ implies $\zer(\hancode((\s))=\zer(\hancode((\pi))$, and the induction hypothesis implies $\zer(\hancode((\pi))=\rlmin(\pi)$. In this case, $\s$ is formed by replacing the largest excedance top of $\pi$ with $k+1$, the second largest excedance top of $\pi$ with the largest excedance top, etc. until the $(s-1)$th largest excedance top of $\pi$ replaces the $s$th largest excedance top, and the $s$th largest excedance top of $\pi$ is inserted at position $\nu^{-1}(s)$ to create a new excedance at this position. Again, none of the now $s+1$ excedances of $\s$ can be right-to-left minima by Lemma~\ref{lem:exc-rlmin}. The relative positions of the non-excedances of $\pi$ stay the same in $\s$. The value of each excedance of $\pi$ is increased in $\s$, which has no effect on the right-to-left minima in the non-excedance positions. Thus there is no change in the right-to-left minima from $\pi$ to $\s$, hence $\rlmin(\s)=\rlmin(\pi)$ and $\zer(\hancode((\s))=\rlmin(\s)$ as desired.

 \end{proof}

 \begin{thm} The triple $(\rlmin,\exc,\den)$ is Stirling-Euler-Mahonian.\label{thm:rlminexcdenSEM}\end{thm}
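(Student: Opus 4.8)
The plan is to obtain this theorem as an immediate consequence of Theorem~\ref{thm:SEM} together with the two identifications of the code statistics established above. First I would observe that $\hancode$ qualifies as a permutation code for $\den$ in the sense of Section~\ref{Sec:Codes}: it is a bijection $\mathfrak{S}_n\to E_n$, and as noted in the discussion preceding Proposition~\ref{prop:sthancode}, the relation $\den(\Psi(\s,s))=\den(\s)+s$ guarantees that $\add(\hancode(\s))=\den(\s)$. Hence $\den$ is a Mahonian statistic equipped with the permutation code $\hancode$, and Theorem~\ref{thm:SEM} applies directly.

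Applying Theorem~\ref{thm:SEM} with $\mstat=\den$ and $\mstatcode=\hancode$ yields that the triple $(\zer(\hancode),\st(\hancode),\den)$ is Stirling-Euler-Mahonian. The remaining step is purely a matter of rewriting the two code-based statistics in familiar terms: by Proposition~\ref{prop:sthancode} we have $\st(\hancode(\s))=\exc(\s)$ for every $\s\in\mathfrak{S}_n$, and by Theorem~\ref{theorem:zerhancode} we have $\zer(\hancode(\s))=\rlmin(\s)$ for every $\s$. Substituting these equalities into the Stirling-Euler-Mahonian triple produced by Theorem~\ref{thm:SEM} gives exactly that $(\rlmin,\exc,\den)$ has the same trivariate distribution as $(\rlmin,\des,\maj)$, which is the claim.

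Since the statement is a corollary of results already in hand, there is no genuine obstacle remaining at this stage; the real work resides in Theorem~\ref{theorem:zerhancode}, whose proof occupied the delicate case analysis on the behavior of $\Psi$ under the swapping algorithm. I would therefore keep the proof to two sentences, invoking Theorem~\ref{thm:SEM}, Proposition~\ref{prop:sthancode}, and Theorem~\ref{theorem:zerhancode} in that order, and noting explicitly that the $\exc$ interpretation of $\st(\hancode)$ supplies a genuinely natural Eulerian mate here, in contrast to the situation with $\dencode$, where neither $\zer(\dencode)$ nor $\st(\dencode)$ was seen to coincide with a classical statistic.
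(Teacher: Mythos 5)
Your proposal is correct and matches the paper's proof exactly: the paper likewise deduces the theorem directly from Theorem~\ref{thm:SEM} applied to the code $\hancode$, together with Proposition~\ref{prop:sthancode} (giving $\st(\hancode)=\exc$) and Theorem~\ref{theorem:zerhancode} (giving $\zer(\hancode)=\rlmin$). Your extra remark verifying that $\add(\hancode(\s))=\den(\s)$ via the relation $\den(\Psi(\s,s))=\den(\s)+s$ is a sound, if implicit in the paper, check that $\hancode$ qualifies as a permutation code.
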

 \begin{proof}
   Follows directly from Theorem~\ref{thm:SEM}, Proposition~\ref{prop:sthancode}, and Theorem~\ref{theorem:zerhancode}.
 \end{proof}

 \section{A Stirling-Euler-Mahonian triple for $\mak$}\label{Sec:IntroToMAK}
The statistic $\mak$ was first introduced in~\cite{FZ90}, where a proof that it is Mahonian can be found. We will use the definition of $\mak$ from~\cite{CSZ97}, which first requires some terminology. A \textit{descent block} of a permutation $\s\in\mathfrak{S}_n$ is a maximal decreasing subword of $\sigma$. A descent block is an \textit{outsider} if it consists of only one value, otherwise it is called a \textit{proper descent block}. In a proper descent block, the leftmost (and largest) value is called its \textit{closer}, and the rightmost (and smallest) value is called its \textit{opener}. A descent block \textit{embraces} $\sigma_j$ of $\sigma$ if the value $\sigma_j$ is strictly between the opener and closer of the block. The \textit{right embracing number} of a value $\sigma_j$ of $\sigma$, denoted $\rem(\s_j)$, is the number of descent blocks strictly to the right of $\s_j$ in $\s$ that embrace $\s_j$. We can then define
$$\mak(\s)=\sum_{\s_{j+1}>\s_{j}} \s_{j} + \sum_{j=1}^{n} \rem(\s_j).$$
Note the first term is just the sum of all the descent bottoms of $\s$.

For example, consider the permutation $\s=354162$, which can be written with dashes separating the descent blocks as $3-541-62$ to make calculations easier. We can now see that $\s$ has descent blocks $3$, $541$, and $62$ and descent bottoms $1$, $2$, and $4$. We can see that $\rem(1)=0$, $\rem(2)=0$, $\rem(3)=2$, $\rem(4)=1$, $\rem(5)=1$, and $\rem(6)=0$, so $\mak(354162)=1+2+4+2+1+1=11$.
 
 Our study of $\mak$ requires a different approach, since there was previously no known permutation code for this statistic. Along the way, we will indirectly define $\makcode$. For a permutation $\s\in\mathfrak{S}_n$, we define the \textit{inversion bottom number} of a value $\s_j$ as $\#\{\s_k\ \vert \ k<j \ \mathrm{and} \ \s_k>\s_j\}$. This counts the number of inversions in $\s$ where $\s_j$ is the smaller value. We also define the \textit{inversion top number} of $\s_j$ as $\#\{\s_k \ \vert \ k>j \ \mathrm{and} \  \s_j>\s_k\}$. This counts the number of inversions in $\s$ where $\s_j$ is the larger value. Finally, we will call a permutation written as a function in two-line notation a \textit{biword}.

 In~\cite{CSZ97}, the authors define a bijection $\Phi: \mathfrak{S}_n\to \mathfrak{S}_n$ and prove that it satisfies that for all $\s\in \mathfrak{S}_n$, \begin{equation}\des(\s)=\exc(\Phi(\s)) \ \textrm{and} \ \mak(\s)=\den(\Phi(\s)).\label{eq:phiproperties}\end{equation} The existence of $\Psi$ implies that $(\des,\mak)$ is Euler-Mahonian. In addition, it is proven that the set of descent tops of $\s$ equals the set of excedance tops of $\Phi(\s)$ and the set of descent bottoms of $\s$ equals the set of excedances of $\Phi(\s)$. Note this bijection also preserves other permutation statistics which we will not use here. The bijection is defined as follows.

For a permutation $\s\in \mathfrak{S}_n$, we will form the biwords $\begin{pmatrix}f \\f^{\prime}\end{pmatrix}$ and $\begin{pmatrix}g \\g^{\prime}\end{pmatrix}$, then construct the biword $\tau^{\prime}=\begin{pmatrix}f & g \\f^{\prime} & g^{\prime}\end{pmatrix}$ by concatenating $f$ and $g$, and $f^{\prime}$ and $g^{\prime}$, respectively. The word $f$ is the subword of descent bottoms of $\s$ in increasing order, and the word $g$ is the subword of non-descent bottoms of $\s$, also in increasing order. The word $f^{\prime}$ is the subword of descent tops of $\s$, ordered so that the inversion bottom number of a value $k$ in $f^{\prime}$ equals the right embracing number of $k$ in $\s$, and $g^{\prime}$ is the subword of non-descent tops in $\s$, ordered so that the inversion top number of a value $\ell$ in $g^{\prime}$ equals the right embracing number of $\ell$ in $\s$. It is proven in~\cite{CSZ97} that such a configuration of $f^{\prime}$ and $g^{\prime}$ is both possible and uniquely defined for each $\s\in \mathfrak{S}_n$. After reordering the columns of $\tau^{\prime}$ so that the top row is in increasing order, we take the bottom row of the rearranged biword to be $\tau=\Phi(\s)$.

 For example if $\s=354162$, then , then $f=124$ and $g=356$. We previously calculated that $\rem(1)=0$, $\rem(2)=0$, $\rem(3)=2$, $\rem(4)=1$, $\rem(5)=1$, and $\rem(6)=0$, so $f^{\prime}=645$ and $g^{\prime}=312$. Thus $\tau^{\prime}=\begin{pmatrix}1 & 2 & 4 & 3 & 5 & 6 \\6 & 4 & 5 & 3 & 1 & 2\end{pmatrix}$, and the bottom row obtained after reordering the top row yields $\tau=643512$. Note that the descent tops of $\s$ are $4, 5$, and $6$, which are the excedance tops of $\tau$, the descent bottoms of $\s$ are $1, 2$, and $4$, which are the excedances of $\tau$, and that $\mak(\s)=11=\den(\tau)$ and $\des(\s)=3=\exc(\tau)$.

 The following theorem establishes a Stirling partner for the Euler-Mahonian pair $(\des, \mak)$.

 \begin{thm}Let $\Phi:\mathfrak{S}_n\to \mathfrak{S}_n$ be the bijection described above and suppose $\sigma\in \mathfrak{S}_n$. Then $\rlmin(\s)=\rlmin(\Phi(\s))$.\label{thm:phirlmin}\end{thm}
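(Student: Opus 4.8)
The plan is to express both $\rlmin(\s)$ and $\rlmin(\Phi(\s))$ as the cardinality of one and the same set: the set of values that are non-descent-tops of $\s$ and have right embracing number zero. Write $\tau=\Phi(\s)$. The starting observation is that a value $v$ of a permutation $\pi$ is a right-to-left minimum exactly when no smaller value lies to its right, i.e.\ when its inversion top number in $\pi$ is $0$; so $\rlmin(\pi)$ counts the values of $\pi$ whose inversion top number vanishes. I would compute this count on each side.

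First, for $\tau$. By Lemma~\ref{lem:exc-rlmin} no excedance top of $\tau$ is a right-to-left minimum, so the right-to-left minima of $\tau$ sit among the non-excedance tops, which by the recalled properties of $\Phi$ are precisely the non-descent-tops of $\s$. If $v$ occupies a non-excedance position $p$ of $\tau$, then $v=\tau_p\le p$; and any excedance position $p'>p$ carries a value $\tau_{p'}>p'>p\ge v$, so excedance values lying to the right of $v$ are automatically larger than $v$ and cannot prevent $v$ from being a right-to-left minimum. Hence $v$ is a right-to-left minimum of $\tau$ iff no smaller non-excedance value lies to its right, i.e.\ iff the inversion top number of $v$ within the subword of non-excedance tops is $0$. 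Reading $\tau$ off at the increasing non-excedance positions $g_1<\cdots<g_m$ (where $\tau_{g_i}=g'_i$) shows that this subword is exactly the word $g'$ of the construction, and by design the inversion top number of $v$ in $g'$ equals $\rem(v)$. Therefore $\rlmin(\tau)$ equals the number of non-descent-tops $v$ of $\s$ with $\rem(v)=0$.

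Next, for $\s$ itself. A descent top has its (smaller) descent bottom immediately to its right, so descent tops are never right-to-left minima; thus the right-to-left minima of $\s$ again lie among the non-descent-tops. It then remains to prove, for a non-descent-top $v$, the equivalence: some smaller value lies to the right of $v$ in $\s$ if and only if $\rem(v)>0$. One direction is immediate, since an embracing descent block to the right of $v$ has opener strictly below $v$, giving a smaller value to the right. For the converse, suppose some smaller value lies to the right of $v$; because $v$ is a non-descent-top it is followed by a larger value, so the first position $q$ to the right of $v$ carrying a value below $v$ satisfies $\s_{q-1}>v>\s_q$. Then positions $q-1,q$ lie in a common descent block whose closer exceeds $v$ and whose opener is below $v$; the ascent at $v$ prevents this block from reaching back to $v$'s position, so it lies entirely to the right of $v$ and embraces $v$, giving $\rem(v)>0$. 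Consequently $v$ is a right-to-left minimum of $\s$ iff $\rem(v)=0$, and $\rlmin(\s)$ too equals the number of non-descent-tops $v$ of $\s$ with $\rem(v)=0$. Comparing the two counts yields $\rlmin(\s)=\rlmin(\Phi(\s))$.

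The main obstacle is the equivalence in the third paragraph, which carries the real combinatorial content: it translates the defining data of the word $g'$ built into $\Phi$ (the right embracing numbers, encoded through inversion top numbers) back into the plain positional statement that $v$ does or does not have a smaller value to its right in $\s$. The ``first drop below $v$'' construction is the crux, and some care is needed to confirm that the descent block it produces genuinely lies to the right of $v$ and straddles $v$ in value. By contrast, the two reductions to non-descent-tops and non-excedance tops are comparatively routine, relying only on Lemma~\ref{lem:exc-rlmin} and the elementary inequality $\tau_{p'}>p'>p\ge v$ that rules out interference from excedance positions.
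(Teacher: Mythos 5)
Your proof is correct and takes essentially the same route as the paper's: both arguments identify the right-to-left minima of $\s$ and of $\Phi(\s)$ with the non-descent-tops of $\s$ having right embracing number zero, matched through the subword $g^{\prime}$ and Lemma~\ref{lem:exc-rlmin}. The differences are only local streamlinings: your inequality $\tau_{p^{\prime}}>p^{\prime}>p\geq v$ replaces the paper's rearrangement discussion of why values from $f^{\prime}$ cannot interfere, and your contrapositive ``first drop below $v$'' argument is the mirror image of the paper's block-by-block chaining showing that $\rem(v)=0$ forces every later value to exceed $v$.
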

 \begin{proof}
 Let $\tau=\Phi(\s)$. Suppose $\s_i$ is a right-to-left minimum of $\s$. Since $\s_i$ is smaller than all value to its right in $\s$, in particular $\s_{i+1}>\s_i$. Thus $\s_i$ is the closer for a (possibly outsider) descent block in $\s$, and hence $\s_i$ must be a non-descent top of $\s$. Therefore the image $\tau_j$ of $\s_i$ under the map $\Phi$ is in the subword $g^{\prime}$. Additionally, $\s_i$ being smaller than every entry to its right implies that no descent block to the right of $\s_i$ can embrace $\s_i$. Thus the right embracing number of $\s_i$ in $\s$ is $0$. Hence the location of $\tau_j$ in the subword $g^{\prime}$ under the map $\Phi$ is chosen as to have inversion top number equal to $0$. This implies that no entry in $\tau^{\prime}$ to the right of $\tau_j$ is less than $\tau_j$, hence $\tau_j$ is a right-to-left minimum of $\tau^{\prime}$. We will now show that $\tau_j$ is also a right-to-left minimum of $\tau$. 
 
 We know $\tau_j$ must be in the subword $g^{\prime}$, but more generally all the right-to-left minima of $\tau^{\prime}$ must be in the subword $g^{\prime}$. The right-to-left minima in a permutation must appear in ascending order when reading the permutation word from left to right, since a right-to-left minimum cannot have a smaller right-to-left minimum to its right. So the right-to-left minima in $g^{\prime}$ must appear in ascending order. Upon placing the top row of $\tau^{\prime}$ in ascending order to obtain $\tau$, because the entries in $g$ are in ascending order it is clear that the values of $g^{\prime}$ will remain in the same order relative to one another in $\tau$. The only way $\tau_j$ could fail to be a right-to-left minimum of $\tau$ would be if there were a right-to-left minimum smaller than $\tau_j$ to the right of $\tau_j$ in the word for $\tau$. This cannot happen with any values from the subword $g^{\prime}$ for the reasons given above. All of the values in $f^{\prime}$ are excedance tops in $\tau^{\prime}$, and will remain excedance tops when reordered to produce $\tau$. So none of the values from $f^{\prime}$ can be right-to-left minima to the right of $\tau_j$, again by Lemma~\ref{lem:exc-rlmin}. Hence $\tau_j$ has no right-to-left minimum smaller than $\tau_j$ to its right in the word for $\tau$. Thus $\tau_j$ is a right-to-left minimum of $\tau$.       
 

 Now suppose $\tau_j$ is a right-to-left minimum of $\tau$. By Lemma~\ref{lem:exc-rlmin}, $\tau_j$ must be a non-excedance top of $\tau$. Also since $\tau_j$ is less than all entries to its right, the inversion top number of $\tau_j$ is $0$. Hence under the bijection $\Phi$, $\tau_j$ is the image of an entry $\s_i$ that is a non-descent top with right embracing number $0$. Since $\s_i$ is a non-descent top, note we must have that $\tau_j$ is in the word $g^{\prime}$ and $\s_{i+1}>\s_i$. Let $\s_k$ be the opener of the descent block with closer $\s_{i+1}$. But then we must have $\s_k>\s_i$, otherwise the descent block will embrace $\s_i$, contradicting that the right embracing number of $\s_i$ is $0$. We also must have that $\s_{k+1}>\s_k$ since $\s_{k+1}$ begins a new descent block. But then we also have that $\s_{k+1}>\s_i$, so the opener of the descent block with closer $\s_{k+1}$ must also be greater than $\s_i$, otherwise this descent block will embrace $\s_i$. We continue in this manner until we consider all entries to the right of $\s_i$, all of which must be greater than $\s_i$. Thus $\s_i$ is a right-to-left minimum of $\s$.

 Since the right-to-left minima of $\s$ are in one-to-one correspondence with the right-to-left minima of $\tau=\Phi(\s)$, we conclude that $\rlmin(\s)=\rlmin(\Phi(\s))$.
 \end{proof}

 \begin{thm} The triple $(\rlmin, \des, \mak)$ is Stirling-Euler-Mahonian.
 \end{thm}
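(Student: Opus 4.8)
The plan is to show that the bijection $\Phi$ transports all three statistics at once, so that it carries the triple $(\rlmin, \des, \mak)$ onto the triple $(\rlmin, \exc, \den)$, which is already known to be Stirling-Euler-Mahonian by Theorem~\ref{thm:rlminexcdenSEM}. Thus the present statement is a formal composition of results already in hand, and no new combinatorics is required.

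First I would collect the three facts that pin down the behavior of $\Phi$ on a single permutation $\s$. From the properties of $\Phi$ recorded in~(\ref{eq:phiproperties}) we have $\des(\s)=\exc(\Phi(\s))$ and $\mak(\s)=\den(\Phi(\s))$, while Theorem~\ref{thm:phirlmin} gives $\rlmin(\s)=\rlmin(\Phi(\s))$. Reading these three equalities together yields the single vector identity
\[
(\rlmin(\s),\des(\s),\mak(\s))=(\rlmin(\Phi(\s)),\exc(\Phi(\s)),\den(\Phi(\s)))
\]
valid for every $\s\in\mathfrak{S}_n$.

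Next I would pass to generating functions. Forming the monomial $y^{\rlmin(\s)}x^{\des(\s)}q^{\mak(\s)}$ and summing over $\mathfrak{S}_n$, then using that $\Phi$ is a bijection to re-index the sum by $\tau=\Phi(\s)$ (as $\s$ ranges over $\mathfrak{S}_n$, so does $\tau$), rewrites the left-hand side as $\sum_{\tau\in\mathfrak{S}_n}y^{\rlmin(\tau)}x^{\exc(\tau)}q^{\den(\tau)}$. By Theorem~\ref{thm:rlminexcdenSEM} this last sum equals the $(\rlmin,\des,\maj)$ generating function, which is precisely the defining quantity in~(\ref{Def:CEM}). Hence $(\rlmin,\des,\mak)$ is Stirling-Euler-Mahonian.

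There is essentially no obstacle remaining here: all of the genuine work was already done in establishing Theorem~\ref{thm:phirlmin} (the preservation of $\rlmin$ under $\Phi$) and in the chain of results leading to Theorem~\ref{thm:rlminexcdenSEM}. The only point that deserves a moment's attention is that the three equalities hold simultaneously for the \emph{same} permutation $\s$ paired with its \emph{same} image $\Phi(\s)$; this is immediate from the fact that each was proven for the single map $\Phi$, so they may be bundled into the one vector identity above and the argument reduces to a routine change of summation variable.
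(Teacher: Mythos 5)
Your proposal is correct and matches the paper's proof exactly: the paper likewise derives the result by combining Theorem~\ref{thm:rlminexcdenSEM}, the properties of $\Phi$ in~(\ref{eq:phiproperties}), and Theorem~\ref{thm:phirlmin}. Your write-up simply makes explicit the change of summation variable $\tau=\Phi(\s)$ that the paper leaves implicit.
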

 \begin{proof}
   Follows from Theorem~\ref{thm:rlminexcdenSEM}, (\ref{eq:phiproperties}), and Theorem~\ref{thm:phirlmin}.
 \end{proof}

 \begin{table}[]
 \caption{$\makcode$ for $\mathfrak{S}_3$}
 \label{table:MAKCodeS_3}
\begin{tabular}{ccc}
$\s$ & $\Phi(\s)$ & $\makcode(\s)=\hancode(\Phi(\s))$ \\
123  & 123        & 000                                                                                                          \\
132  & 132        & 002                                                                                                          \\
213  & 213        & 010                                                                                                          \\
231  & 321        & 011                                                                                                          \\
312  & 312        & 001                                                                                                          \\
321  & 231        & 012                                                                                                         
\end{tabular}
\end{table}

 Note that we can use the map $\Phi$ to define $\makcode(\s)=\hancode(\Phi(\s))$. For reference, the values of $\makcode$ are shown for $\mathfrak{S}_3$ in Table~\ref{table:MAKCodeS_3}. Using Proposition~\ref{prop:sthancode}, Theorems~\ref{theorem:zerhancode} and~\ref{thm:phirlmin}, and (\ref{eq:phiproperties}), we can easily see that $\st(\makcode(\s))=\des(\s)$ and $\zer(\makcode(\s))=\rlmin(\s)$. The author tried several obvious approaches to define $\makcode$ directly from $\s$, but this definition was the only one that was found to work.

 \section{A Stirling-Euler-Mahonian triple for $\sor$}\label{Sec:IntroToSor}
   One of the most recently developed Mahonian statistics is the \textit{sorting index} $\sor$. In order to define it, let $\s\in\mathfrak{S}_n$ and let $0\leq j\leq n$. Assume $j$ appears in cycle $C$ of the disjoint cycle decomposition of $\s$. If $j$ is the smallest value in the cycle $C$, we define $c_j(\s)=j$. If $j$ is not the smallest value in the cycle $C$, then we define $c_j(\s)$ to be the value $k<j$ that is the smallest number of steps away from $j$ when moving from $j$ to $k$ along the cycle $C$. We then define the sorting index by the equation
   $$\sor(\s)=\sum_{j=1}^{n} (j-c_j).$$
   \noindent See~\cite{Petersen, Wilson} for more on $\sor$, including a proof that it is Mahonian. As an example, we see that $\s=354162$ has disjoint cycle decomposition $(134)(256)$, so $c_1(\s)=1$, $c_2(\s)=2$, $c_3(\s)=1$, $c_4(\s)=1$, $c_5(\s)=2$, and $c_6(\s)=2$. Hence
   $\sor(354162)=(1-1)+(2-2)+(3-1)+(4-1)+(5-2)+(6-2)=12$.
 
 It is clear from the definition that $1\leq c_j\leq j$ for all $j$, so $0\leq j-c_j\leq j-1$. We let
 $$s_j(\s)=j-c_j(\s)$$
 and define $\sorcode(\s)=(s_1(\s),s_2(\s),\ldots, s_n(\s))$. This is the $B$-code of~\cite{CGG}, where it is proven that $\sorcode:\mathfrak{S}_n\to E_n$ is a bijection. For example, $\sorcode(341625)=(0,0,2,2,3,1)$.

 \begin{prop}
    Let $\s$ be a permutation in $\mathfrak{S}_n$. Then $\zer(\sorcode(\s))=\cyc(\s)$.\label{prop:zersorcode}
 \end{prop}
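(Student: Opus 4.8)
The plan is to read off the zeros of $\sorcode(\s)$ directly from the definition of $c_j$. Recall that $s_j(\s) = j - c_j(\s)$, so the $j$th entry of $\sorcode(\s)$ vanishes precisely when $c_j(\s) = j$. First I would note that the two cases in the definition of $c_j$ behave oppositely in this regard: when $j$ is the smallest value in its cycle $C$, the definition sets $c_j(\s) = j$, giving $s_j(\s) = 0$; when $j$ is \emph{not} the smallest value in $C$, the definition sets $c_j(\s)$ equal to some $k < j$, so that $s_j(\s) = j - k > 0$. Hence $s_j(\s) = 0$ if and only if $j$ is the smallest value in the cycle of $\s$ containing $j$.

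The remaining step is to count these indices. Every cycle in the disjoint cycle decomposition of $\s$ has a unique minimum element, and distinct cycles have disjoint (hence distinct) minima. Consequently the set of indices $j$ with $s_j(\s) = 0$ is in one-to-one correspondence with the set of cycles of $\s$: each cycle contributes exactly one such index, namely its smallest entry. Therefore $\zer(\sorcode(\s))$, the number of zero entries in the code, equals the number of cycles $\cyc(\s)$, as claimed.

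I expect essentially no obstacle here, since the statement is an immediate consequence of how $c_j$ is defined. The only point requiring a moment's care is verifying that $c_j(\s) = j$ forces $j$ to be a cycle minimum, equivalently that the second case of the definition always yields a strictly smaller value; both follow directly from the clause $k < j$ in that case. This argument runs parallel to the entrywise characterization used earlier for $\invcode$ in Proposition~\ref{prop:zerinvcode}.
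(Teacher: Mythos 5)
Your proof is correct and matches the paper's argument exactly: both characterize $s_j(\s)=0$ as occurring precisely when $j$ is the minimum of its cycle, then count one such index per cycle. The paper's version is just terser; your added remark about cycle minima being distinct is fine but not needed beyond what the paper states.
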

\begin{proof}
If $j$ is the smallest value in its cycle $C$, then we get that $c_j(\s)=j$ and thus $s_j(\s)=0$. If $j$ is not the smallest value in its cycle then $c_j(\s)<j$, so $s_j(\s)>0$. Hence $\zer(\sorcode(\s))=\cyc(\s)$ as desired.
\end{proof}

\begin{thm}
The triple $(\cyc, \st(\sorcode),\sor)$ is Stirling-Euler-Mahonian.
\end{thm}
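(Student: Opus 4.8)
The plan is to recognize this statement as another immediate application of Theorem~\ref{thm:SEM}, exactly parallel to the $\inv$ and $\den$ triples established earlier, with the only new ingredient being the identification of $\zer(\sorcode)$ with a familiar Stirling statistic via Proposition~\ref{prop:zersorcode}. First I would observe that the two hypotheses of Theorem~\ref{thm:SEM} are already in place: the sorting index $\sor$ is Mahonian (this is the cited result of Petersen and Wilson), and $\sorcode$ is a genuine permutation code for $\sor$, since it was noted above to coincide with the $B$-code of~\cite{CGG}, where $\sorcode:\mathfrak{S}_n\to E_n$ is proven to be a bijection, and $\add(\sorcode(\s))=\sum_j s_j(\s)=\sum_j(j-c_j(\s))=\sor(\s)$ holds by construction.

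Having confirmed the hypotheses, I would apply Theorem~\ref{thm:SEM} directly with $\mstat=\sor$ and $\mstatcode=\sorcode$. This yields that the triple $(\zer(\sorcode),\st(\sorcode),\sor)$ is Stirling-Euler-Mahonian, i.e.
\[
\sum_{\s\in\mathfrak{S}_n}y^{\zer(\sorcode(\s))}x^{\st(\sorcode(\s))}q^{\sor(\s)}
=\sum_{\s\in\mathfrak{S}_n}y^{\rlmin(\s)}x^{\des(\s)}q^{\maj(\s)}.
\]

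The final step is simply to rewrite the first statistic. By Proposition~\ref{prop:zersorcode} we have $\zer(\sorcode(\s))=\cyc(\s)$ for every $\s\in\mathfrak{S}_n$, so substituting this identity into the generating function above replaces $\zer(\sorcode)$ by $\cyc$ throughout and shows that $(\cyc,\st(\sorcode),\sor)$ has the same trivariate distribution as $(\rlmin,\des,\maj)$, which is the claim. I do not anticipate any genuine obstacle here, since all the substantive work was carried out in Theorem~\ref{thm:SEM} and Proposition~\ref{prop:zersorcode}; the one conceptual point worth flagging is that the definition of a Stirling-Euler-Mahonian triple requires only that the joint distribution match that of $(\rlmin,\des,\maj)$, not that the first coordinate be literally $\rlmin$, so it is legitimate to present the Stirling statistic as $\cyc$ even though Proposition~\ref{prop:zersorcode} does not assert any pointwise equality $\cyc(\s)=\rlmin(\s)$.
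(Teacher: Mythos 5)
Your proposal is correct and matches the paper's own proof, which likewise derives the result immediately from Theorem~\ref{thm:SEM} together with Proposition~\ref{prop:zersorcode}; your extra verifications (that $\sorcode$ is a bijection onto $E_n$ with $\add(\sorcode(\s))=\sor(\s)$, and that the pointwise identity $\zer(\sorcode(\s))=\cyc(\s)$ justifies the substitution in the trivariate generating function) simply make explicit what the paper leaves implicit. Your closing remark is also well taken: the definition of a Stirling-Euler-Mahonian triple only requires equidistribution with $(\rlmin,\des,\maj)$, so presenting the Stirling coordinate as $\cyc$ is legitimate.
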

\begin{proof}
  Follows from Theorem~\ref{thm:SEM} and Proposition~\ref{prop:zersorcode}.
\end{proof}

We checked the values of the statistic $\st(\sorcode)$ on $\mathfrak{S}_n$ for $n=2, 3, 4$ in the FindStat database for combinatorial statistics~\cite{FindStat} and received no direct matches. This provides evidence that $\st(\sorcode)$ is not equal to any familiar Eulerian statistic. 

\section{Properties of Stirling-Euler-Mahonian triples}\label{Sec:Properties}
In this paper we have established that the following triples of permutation statistics are Stirling-Euler-Mahonian:  \begin{itemize} \item $(\rlmin, \mathrm{stc},\inv)$, \item $(\zer(\dencode), \st(\dencode), \den)$, \item $(\rlmin, \exc, \den)$, \item $(\rlmin, \des, \mak)$, and \item $(\cyc, \st(\sorcode), \sor)$.\end{itemize}
In addition, it was shown that the triple of statistics $$(\cyc(\mathrm{DG}),\des,\mathrm{s}_{\mathbb{T}_n, \mathrm{b}}(\mathrm{DG})+\mathrm{E}(\mathrm{DG}))$$ described in~\cite{CycleCounting} is Stirling-Euler-Mahonian. The results in this section apply to these or any other Stirling-Euler-Mahonian triples.

Let $f(q)=\sum_{j=M}^{N} a_jq^j$ be a polynomial in $q$ with $a_M, a_N\ne 0$. We call $M+N$ the \textit{virtual degree} of $f(q)$. The polynomial $f(q)$ is called \textit{symmetric} provided $a_{N-j}=a_{M+j}$ for $0\leq j\leq N-M$, and \textit{unimodal} provided there exists a $P$, $M\leq P\leq N$, such that $a_M\leq\cdots\leq a_{P}\geq a_{P+1}\geq \cdots \geq a_N.$ Recall the \textit{$q$-analog} of the natural number $m$ is given by $\br{m}=1+q+\cdots +q^{m-1}$.

\begin{thm} Let $(\sstat, \estat, \mstat)$ be any Stirling-Euler-Mahonian triple of permutation statistics. Then for any $m\in\N$, the polynomial
$$\sum_{\s\in \mathfrak{S}_n, \ \estat(\s)=k-1} \br{m}^{\sstat(\s)}q^{(n-\sstat(\s))(m-1)+\mstat(\s)} $$
in $q$ is symmetric and unimodal with virtual degree $n(m+k-2)+(k-1)(m-1)$.
\end{thm}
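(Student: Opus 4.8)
The plan is to first use the defining property of a Stirling-Euler-Mahonian triple to remove $\sstat,\estat,\mstat$ entirely. By~(\ref{Def:CEM}) the trivariate distributions agree, so replacing $(\sstat,\estat,\mstat)$ by $(\rlmin,\des,\maj)$ leaves the polynomial in the statement unchanged. Thus it suffices to prove the claim for
\[
P_{n,k}(q)=\sum_{\substack{\s\in\mathfrak{S}_n\\ \des(\s)=k-1}}\br{m}^{\rlmin(\s)}\,q^{(n-\rlmin(\s))(m-1)+\maj(\s)},
\]
which is nonzero exactly for $1\le k\le n$. I would then show that each $P_{n,k}(q)$ is symmetric and unimodal with virtual degree $D_{n,k}=n(m+k-2)+(k-1)(m-1)$ by induction on $n$.

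The engine of the proof is a recursion for $P_{n,k}$, obtained from the $\majcode$ bijection. Since $\majcode\colon\mathfrak{S}_n\to E_n$ carries $(\rlmin,\des,\maj)$ to $(\zer,\st,\add)$ (Propositions~\ref{prop:zer=rlmin} and~\ref{prop:st=des}, together with $\add\circ\majcode=\maj$), and $E_n=E_{n-1}\times\{0,1,\dots,n-1\}$, I split a code $e=(e',e_n)$ and track how appending the last letter $e_n$ changes each statistic: $\add$ increases by $e_n$, $\zer$ increases by $1$ exactly when $e_n=0$, and $\st$ increases by $1$ exactly when $e_n>\st(e')$ (the same elementary appending rule for $\st$ used in Section~\ref{Sec:Den}). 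Writing $\tilde P_n(x,q)=\sum_{e\in E_n}\br{m}^{\zer(e)}q^{(n-\zer(e))(m-1)+\add(e)}x^{\st(e)}$ so that $P_{n,k}=[x^{k-1}]\tilde P_n$, and summing the weight of the new letter over $e_n\in\{0,\dots,n-1\}$, the identity $\br{m}+q^{m}\br{\ell}=\br{m+\ell}$ (where $\ell=\st(e')$ records the descent number) collapses that weight to
\[
\br{m+\ell}+x\,q^{m+\ell}\,\br{n-1-\ell}.
\]
Reading off the coefficient of $x^{k-1}$ then yields
\[
P_{n,k}(q)=\br{m+k-1}\,P_{n-1,k}(q)+q^{\,m+k-2}\,\br{n-k+1}\,P_{n-1,k-1}(q).
\]

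With the recursion in hand, the induction is driven by three classical facts: each $\br{j}=1+q+\cdots+q^{j-1}$ is symmetric and unimodal; a product of polynomials with nonnegative coefficients that are each symmetric and unimodal is again symmetric and unimodal, with virtual degrees adding; and a sum of two symmetric unimodal polynomials sharing a common virtual degree (hence a common center) is symmetric and unimodal. By the inductive hypothesis $P_{n-1,k}$ and $P_{n-1,k-1}$ are symmetric and unimodal, so each term on the right is a product of symmetric unimodal pieces and is therefore symmetric and unimodal. The crucial point is a degree computation: the first term has virtual degree $(m+k-2)+D_{n-1,k}$ and the second has virtual degree $(2m+n+k-4)+D_{n-1,k-1}$, and both simplify to exactly $D_{n,k}$. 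Because the two terms are symmetric about the same center $D_{n,k}/2$, their sum is symmetric and unimodal of virtual degree $D_{n,k}$, completing the induction; the base case $n=1$ is $P_{1,1}(q)=\br{m}$, of virtual degree $m-1=D_{1,1}$.

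The hard part will be establishing the recursion cleanly: one must pin down the exact multipliers $\br{m+k-1}$ and $q^{m+k-2}\br{n-k+1}$, which requires correctly combining the appending rules for $\zer$, $\st$, and $\add$ with the substitution $y=\br{m}q^{-(m-1)}$ and the $q^{n(m-1)}$ normalization hidden in $\tilde P_n$. Once these multipliers are correct, the remaining work is the routine bookkeeping verifying that both terms carry the virtual degree $D_{n,k}$; this shared center is precisely what allows unimodality to survive the addition, and it is the only place where the exact form of $D_{n,k}$ is used.
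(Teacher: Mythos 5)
Your proposal is correct, but it takes a genuinely different route from the paper. The paper's proof is a two-line reduction: since $(\sstat,\estat,\mstat)$ is equidistributed with $(\rlmin,\des,\maj)$, the polynomial in question equals the one for $(\rlmin,\des,\maj)$, and the symmetry, unimodality, and virtual degree of that polynomial are simply cited from~\cite{SymmUni}. You perform the same equidistribution reduction but then prove the cited result from scratch: you derive the recursion $P_{n,k}=\br{m+k-1}P_{n-1,k}+q^{m+k-2}\br{n-k+1}P_{n-1,k-1}$ via the $\majcode$ bijection and the appending rules for $\zer$, $\st$, and $\add$ (your appending rule for $\st$ is the same one the paper uses in its analysis of the Han code, and your derivation of the multipliers checks out, including the telescoping $\br{m}+q^{m}\br{\ell}=\br{m+\ell}$), and you then close the induction with the standard lemmas that products of symmetric unimodal polynomials with nonnegative coefficients are symmetric unimodal with additive virtual degrees, and that sums of symmetric unimodal polynomials with a common center remain so. Notably, your recursion is exactly the $y=m$ specialization of the $q,y$-Eulerian recurrence from~\cite{CycleCounting} that appears as the paper's final theorem, so in effect you have reproved the content of~\cite{SymmUni} by combining two other results the paper only quotes. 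Your degree bookkeeping is right: both $(m+k-2)+D_{n-1,k}$ and $2(m+k-2)+(n-k)+D_{n-1,k-1}$ simplify to $D_{n,k}=n(m+k-2)+(k-1)(m-1)$, and this shared center is what lets unimodality survive the addition; you should just note explicitly the boundary cases $k=1$ and $k=n$, where one term of the recursion vanishes and the argument degenerates harmlessly to a single product. What the paper's approach buys is brevity and reliance on established results; what yours buys is a self-contained proof that explains structurally where the virtual degree comes from, at the cost of needing the classical symmetric-unimodal closure lemmas as outside input.
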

\begin{proof}
In~\cite{SymmUni} it was proven that
$$\sum_{\s\in \mathfrak{S}_n, \ \des(\s)=k-1} \br{m}^{\rlmin(\s)}q^{(n-\rlmin(\s))(m-1)+\maj(\s)} $$
is symmetric and unimodal with virtual degree $n(m+k-2)+(k-1)(m-1)$. But $(\sstat, \estat, \mstat)$ is equidistributed with $(\rlmin, \des, \maj)$, so the result holds.
\end{proof}

For the next theorem, we need to define several additional $q$-analogs. For any real number $y$, the \textit{$q$-analog of the real number $y$} is given by $$\br{y}=\frac{1-q^y}{1-q}.$$ Note this generalizes the definition of the $q$-analog of a natural number. For $m\in\N$, the \textit{$q$-analog of $m!$} is given by $\br{m}!=\br{1}\br{2}\cdots\br{m-1}\br{m}$. The \textit{$q$-analog of the binomial coefficient} is
$${k \brack j}=\frac{\br{k}\br{k-1}\cdots \br{k-j+1}}{\br{j}!}.$$
Note this definition makes sense even if $k$ is a real number.
\begin{thm} Let $(\sstat, \estat, \mstat)$ be any Stirling-Euler-Mahonian triple of permutation statistics. Then for any $n, k\in\N$ and $1\leq k\leq n$ we have
\begin{multline*}\sum_{\s\in \mathfrak{S}_n, \ \estat(\s)=k-1} \br{y}^{\sstat(\s)}q^{(n-\sstat(\s))(y-1)+\mstat(\s)}=\\ \sum_{j=0}^{k-1}{y+n \brack k-j-1}{y+j-1 \brack j}(-1)^{k-j-1}q^{\binom{k-j-1}{2}}\br{y+j}^n.\end{multline*}
\end{thm}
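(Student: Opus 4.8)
The plan is to reduce to the canonical triple $(\rlmin,\des,\maj)$ and then to recognize the right-hand side as the $q$-binomial inversion of a single ``forward'' Worpitzky-type identity.

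First I would argue the reduction exactly as in the previous theorem: the summand $\br{y}^{\sstat(\s)}q^{(n-\sstat(\s))(y-1)+\mstat(\s)}$, together with the condition $\estat(\s)=k-1$, depends only on the triple of values $(\sstat(\s),\estat(\s),\mstat(\s))$, so the left-hand side is a function of the joint distribution of $(\sstat,\estat,\mstat)$ on $\mathfrak{S}_n$. Since this distribution equals that of $(\rlmin,\des,\maj)$ by hypothesis, it suffices to prove the identity for that triple. Writing
\[
P_k(y,q)=\sum_{\s\in\mathfrak{S}_n,\ \des(\s)=k-1}\br{y}^{\rlmin(\s)}q^{(n-\rlmin(\s))(y-1)+\maj(\s)},
\]
the goal becomes to show that $P_k(y,q)$ equals the displayed right-hand side.

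The heart of the argument is a forward identity dual to the one to be proved, namely
\[
{y+j-1 \brack j}\br{y+j}^{\,n}=\sum_{k=1}^{j+1}{y+n+j-k \brack j-k+1}\,P_k(y,q)\qquad(j\ge 0).
\]
I would prove this by specializing $y$ to a positive integer $m$: then $\br{m+j}^{\,n}$ is the $q$-weighted count of words of length $n$ over $\{0,1,\dots,m+j-1\}$, and a standardization argument groups these words according to the descent structure of an associated permutation, producing $\maj$ from the total $q$-weight and the $\rlmin$-refinement $\br{y}^{\rlmin}q^{(n-\rlmin)(y-1)}$ from the remaining freedom; this integer-parameter statement is precisely the refined $q$-Worpitzky identity underlying the result of~\cite{SymmUni} invoked in the previous theorem. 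Since each quantity appearing is a rational function of $q^{y}$, and the two sides agree for the infinitely many integers $m$, the forward identity holds identically in $y$.

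Finally I would invert. In a formal variable $t$ set $\widetilde P(t)=\sum_{\ell\ge 0}P_{\ell+1}(y,q)\,t^{\ell}$ and $H(t)=\sum_{j\ge 0}{y+j-1 \brack j}\br{y+j}^{\,n}\,t^{j}$; the forward identity is exactly the Cauchy product $H(t)=\left(\sum_{i\ge 0}{y+n+i-1 \brack i}t^{i}\right)\widetilde P(t)$. By the $q$-binomial theorem the series $\sum_{i\ge 0}{y+n+i-1 \brack i}t^{i}$ and $\sum_{i\ge 0}(-1)^{i}q^{\binom{i}{2}}{y+n \brack i}t^{i}$ are mutually reciprocal, so $\widetilde P(t)=\bigl(\sum_{i\ge 0}(-1)^{i}q^{\binom{i}{2}}{y+n \brack i}t^{i}\bigr)H(t)$, and extracting the coefficient of $t^{k-1}$ reproduces the claimed right-hand side after the substitution $i=k-j-1$. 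The reciprocity of these two series, classical for an integer upper index, extends to the real parameter $y+n$ by the same polynomiality-in-$q^{y}$ principle used for the forward identity. The main obstacle is the forward identity itself, and specifically its integer-parameter combinatorial core, since once that is in hand the inversion is routine $q$-series bookkeeping.
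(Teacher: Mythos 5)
Your proposal is correct in its overall logic, but it takes a genuinely different route from the paper, and the difference is worth spelling out. The paper's entire proof is your first step plus a citation: since the summand depends only on the values of $(\sstat,\estat,\mstat)$ and the triple is by definition equidistributed with $(\rlmin,\des,\maj)$, the identity transfers to the canonical triple, and the closed form for $(\rlmin,\des,\maj)$ is then simply quoted from the author's thesis with no further argument. You instead attempt to re-derive that base identity, and your scaffolding is sound: the forward identity ${y+j-1 \brack j}\br{y+j}^{\,n}=\sum_{k=1}^{j+1}{y+n+j-k \brack j-k+1}P_k(y,q)$ is in fact true (for instance, for $n=2$, $j=1$ it reduces to $\br{y+1}^2-\br{y+2}\br{y}=q^{y}$, and for $n=3$, $j=1$ to an identity in $z=q^{y}$ that checks symbolically); your Cauchy-product reformulation is exactly right; the mutual reciprocity of $\sum_{i}{y+n+i-1 \brack i}t^{i}$ and $\sum_{i}(-1)^{i}q^{\binom{i}{2}}{y+n \brack i}t^{i}$ is the pair of classical $q$-binomial theorems; and the extension from integer $y=m$ to real $y$ is legitimate because every quantity involved is a polynomial in $q^{y}$ with coefficients in $\mathbb{Q}(q)$, so agreement at the infinitely many points $q^{m}$ forces identity. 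Extracting the coefficient of $t^{k-1}$ does reproduce the stated right-hand side. What your route buys is a self-contained derivation where the paper offers only a black box, and it explains structurally why the alternating signs and the factor $q^{\binom{k-j-1}{2}}$ appear: they are the inverse of a triangular $q$-binomial system. What it costs is that the one genuinely hard ingredient, the integer-parameter combinatorial core producing the refinement $\br{m}^{\rlmin(\s)}q^{(n-\rlmin(\s))(m-1)}$ via standardization of words, is dispatched in a single sentence by appeal to the same circle of the author's prior results that the paper itself cites; so you have relocated, rather than eliminated, the nontrivial step, and a complete write-up along your lines would need to supply that refined $q$-Worpitzky bijection in full detail.
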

\begin{proof}
In~\cite{mythesis} it was proven that
\begin{multline*}\sum_{\s\in \mathfrak{S}_n, \ \des(\s)=k-1} \br{y}^{\rlmin(\s)}q^{(n-\rlmin(\s))(y-1)+\maj(\s)}=\\ \sum_{j=0}^{k-1}{y+n \brack k-j-1}{y+j-1 \brack j}(-1)^{k-j-1}q^{\binom{k-j-1}{2}}\br{y+j}^n.\end{multline*}
But $(\sstat, \estat, \mstat)$ is equidistributed with $(\rlmin, \des, \maj)$, so the result holds.
\end{proof}

In~\cite{CycleCounting} the \textit{$q,y$-Eulerian numbers} $\qyeul{n}{k}$ are defined as
\begin{equation}\qyeul{n}{k}=\sum_{\s\in \mathfrak{S}_n, \ \des(\s)=k-1}\br{y}^{\rlmin(\s)}q^{(n-\rlmin(\s))(y-1)+\maj(\s)}.\label{eq:qyEulerianNums}\end{equation}
But since $(\sstat, \estat, \mstat)$ is equidistributed with $(\rlmin, \des, \maj)$ if $(\sstat, \estat, \mstat)$ is any Stirling-Euler-Mahonian triple, we could actually define the $q,y$-Eulerian numbers as
$$\qyeul{n}{k}=\sum_{\s\in \mathfrak{S}_n, \ \estat(\s)=k-1}\br{y}^{\sstat(\s)}q^{(n-\sstat(\s))(y-1)+\mstat(\s)}$$
and obtain the following recurrence.
\begin{thm}
Let $\qyeul{n}{k}$ be as defined in (\ref{eq:qyEulerianNums}) for any Stirling-Euler-Mahonian triple $(\sstat, \estat, \mstat)$. Then for any $n, k\in\N$ we have
$$\qyeul{n}{k}=\br{y+k-1}\qyeul{n-1}{k}+q^{y+k-2}\br{n-k+1}\qyeul{n-1}{k-1}.$$
\end{thm}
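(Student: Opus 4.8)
The plan is to prove the recurrence directly for the canonical triple $(\rlmin,\des,\maj)$. Since every Stirling-Euler-Mahonian triple is equidistributed with $(\rlmin,\des,\maj)$, the polynomial $\qyeul{n}{k}$ is literally the same object no matter which triple is fed into it, so it suffices to verify the recurrence for the definition in~(\ref{eq:qyEulerianNums}). The engine of the proof will be insertion of the largest letter: every $\s\in\mathfrak{S}_n$ arises uniquely by inserting $n$ into one of the $n$ gaps of some $\pi\in\mathfrak{S}_{n-1}$, and I would track how the weight $\br{y}^{\rlmin(\s)}q^{(n-\rlmin(\s))(y-1)+\maj(\s)}$ transforms under each such insertion.

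First I would record the three elementary effects of inserting $n$ into a gap of $\pi$. For $\rlmin$: the new letter $n$ is a right-to-left minimum precisely when it is placed at the very end, in which case $\rlmin$ rises by $1$, and is otherwise unchanged; since $n$ is larger than every other letter, no existing letter changes its status. For $\des$: inserting into one of the $\des(\pi)$ internal descent gaps, or at the end, leaves $\des$ unchanged, while inserting at the front or into an internal ascent gap raises $\des$ by $1$. For $\maj$: if $n$ is inserted into gap $g$ (with $\pi_g$ immediately to its left), the new descent sits at position $g+1$ and every descent of $\pi$ lying to the right of $g$ shifts one place rightward, so a short computation gives $\maj(\s)-\maj(\pi)=1+a(g)$ for a descent-gap insertion and $\maj(\s)-\maj(\pi)=(g+1)+a(g)$ for a front or ascent-gap insertion, where $a(g)$ is the number of descents of $\pi$ strictly to the right of position $g$. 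Combining these with the $\rlmin$ bookkeeping, the weight ratio $w(\s)/w(\pi)$ equals $\br{y}$ for the terminal insertion, $q^{y+a(g)}$ for a descent-gap insertion, and $q^{y+g+a(g)}$ for a front or ascent-gap insertion.

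To obtain the first term I would start from each $\pi$ with $\des(\pi)=k-1$ (these contribute $\qyeul{n-1}{k}$) and sum the weight ratios over the $\des$-preserving insertions. The terminal insertion contributes $\br{y}$, and as $g$ ranges over the $k-1$ descent gaps the quantity $a(g)$ runs through $0,1,\dots,k-2$, contributing $q^{y}\br{k-1}$; together $\br{y}+q^{y}\br{k-1}=\br{y+k-1}$, which yields $\br{y+k-1}\qyeul{n-1}{k}$. For the second term I would start from each $\pi$ with $\des(\pi)=k-2$ (contributing $\qyeul{n-1}{k-1}$) and sum $q^{y+g+a(g)}$ over the $n-k+1$ $\des$-raising gaps. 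The crux is the claim that, as $g$ runs over those gaps in increasing order, the statistic $g+a(g)$ assumes the consecutive values $k-2,k-1,\dots,n-2$, each exactly once. I would prove this by a one-step analysis: passing from $g$ to $g+1$ changes $g+a(g)$ by $1-[\,g+1\text{ is a descent}\,]$, so the value is frozen across a block of descents and rises by exactly $1$ at each $\des$-raising gap; since $g+a(g)=k-2$ at the front gap $g=0$ and there are $n-k+1$ such gaps, the attained values are precisely $k-2,\dots,n-2$. Hence the sum is $q^{y}\,q^{k-2}\br{n-k+1}=q^{y+k-2}\br{n-k+1}$, giving the second term.

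The routine part is the $\rlmin$ and $\des$ bookkeeping, which reads off immediately from the position of $n$. The main obstacle is the $\maj$ analysis: one must both compute $\maj(\s)-\maj(\pi)$ correctly for each insertion type and then show that the resulting $q$-powers telescope into the clean factors $\br{y+k-1}$ and $q^{y+k-2}\br{n-k+1}$. The telescoping for the second term is the delicate point, and it hinges on the consecutive-values property of $g+a(g)$ established above; once that is in hand, the identity $\br{y}+q^{y}\br{k-1}=\br{y+k-1}$ disposes of the first term and the recurrence follows.
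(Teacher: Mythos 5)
Your proof is correct, but it takes a genuinely different route from the paper's. Beyond your opening reduction---which matches the paper exactly: since any Stirling-Euler-Mahonian triple is equidistributed with $(\rlmin,\des,\maj)$, the polynomial $\qyeul{n}{k}$ is the same regardless of which triple defines it---the paper's proof consists of a single citation: the recurrence for $\sum_{\des(\s)=k-1}\br{y}^{\rlmin(\s)}q^{(n-\rlmin(\s))(y-1)+\maj(\s)}$ was established in~\cite{CycleCounting}, and no combinatorial argument appears in this paper at all. You instead give a self-contained proof by insertion of the largest letter, and your bookkeeping checks out: the weight ratios $\br{y}$ (terminal gap), $q^{y+a(g)}$ (descent gap), and $q^{y+g+a(g)}$ (front or ascent gap) are right once one notes that every non-terminal insertion also picks up a factor $q^{y-1}$ because the exponent $(n-\rlmin(\s))(y-1)$ grows with $n$, which your stated ratios correctly absorb; over the descent gaps of a $\pi$ with $\des(\pi)=k-1$ the quantity $a(g)$ indeed takes each value $0,\dots,k-2$ once, so the identity $\br{y}+q^{y}\br{k-1}=\br{y+k-1}$ yields the first term; and your one-step analysis of $h(g)=g+a(g)$ (constant across descent gaps, incremented by $1$ at each $\des$-raising gap, starting from $h(0)=\des(\pi)=k-2$) correctly shows $h$ hits each of $k-2,\dots,n-2$ exactly once over the $n-k+1$ $\des$-raising gaps, giving $q^{y+k-2}\br{n-k+1}$ for the second term. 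What the paper's approach buys is brevity and provenance---the recurrence belongs to the earlier paper that introduced $\qyeul{n}{k}$---while yours buys self-containment and an explicit mechanism (a Carlitz-style insertion argument enriched by the observation that inserting $n$ changes $\rlmin$ only at the terminal gap), letting a reader verify the recurrence without consulting~\cite{CycleCounting}.
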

\begin{proof}
Since $(\sstat, \estat, \mstat)$ is equidistributed with $(\rlmin, \des, \maj)$, we know that
$$\qyeul{n}{k}=\sum_{\s\in \mathfrak{S}_n, \ \des(\s)=k-1}\br{y}^{\rlmin(\s)}q^{(n-\rlmin(\s))(y-1)+\maj(\s)}.$$
In~\cite{CycleCounting} it was shown that
$$\sum_{\s\in \mathfrak{S}_n, \ \des(\s)=k-1}\br{y}^{\rlmin(\s)}q^{(n-\rlmin(\s))(y-1)+\maj(\s)}$$
satisfies the desired recurrence, so we get that $\qyeul{n}{k}$ does as well.
\end{proof}


\bibliography{StirlingEulerMahonian.bib}{}
\bibliographystyle{plain}

\end{document}